\normalfont\fontsize{14}{5}\bfseries}{\thesection}{1em}{}
\title{Exotically knotted disks and complex curves}
\author{Kyle Hayden} \address{Columbia University, New York, NY 10027} 
\email{hayden@math.columbia.edu}
\theoremstyle{plain}
\newtheorem{thm}{Theorem}[section]   \newtheorem{lem}[thm]{Lemma}
              \newtheorem{prop}[thm]{Proposition}
\newtheorem{ques}[thm]{Question}
\newtheorem*{ques*}{Question}
\newtheorem*{thmcusp*}{Theorem B}
\newtheorem{mainthm}{Theorem}[section]
\theoremstyle{definition}
\newtheorem{rem}[thm]{Remark}       \newtheorem*{rem*}{Remark}             
\theoremstyle{remark}
\newtheorem{ex-main}[thm]{Example}
\newcommand{\cc}{\mathbb{C}}
\newcommand{\rr}{\mathbb{R}}
\newcommand{\zz}{\mathbb{Z}}
\newcommand{\st}{{\mathrm{st}}}
\renewcommand{\S}{\textsection}
\newcommand{\cee}{\mathbb{C}}
\newcommand{\cp}{\cee {P}^2}
\newcommand{\surf}{F}
\newcommand{\surfb}{S}
\begin{document}

\begin{abstract}
\vspace{-.1in}
This paper studies properly embedded surfaces in the 4-ball that are exotically knotted (i.e., topologically but not smoothly isotopic), and leverages this local  phenomenon to study surfaces in larger 4-manifolds. The main results provide a new construction of exotically knotted surfaces, including exotic slice surfaces of all genera in the 4-ball and exotic closed surfaces of large negative self-intersection in larger 4-manifolds.  The construction is well-suited to the complex and symplectic settings, providing the first examples of exotically knotted complex curves and  symplectic 2-spheres. Along the way, we articulate some diagrammatic tools for constructing symplectic surfaces and complex curves.  

\vspace{-.043in}

\quad We also use local knotting to investigate the geography problem for knot groups, constructing the first examples of exotically knotted surfaces in closed, simply connected 4-manifolds whose knot groups contain nonabelian free subgroups, hence are not expected to be ``good'' groups in the sense of surgery theory. 
\end{abstract}

\maketitle

\titleformat{\section}{\large\bfseries}{}{0pt}{\center \thesection.  }

\titlespacing{\section}{0pt}{*4}{*1.5}

\vspace{-.6in}

\section{Introduction}

A pair of smooth surfaces in a smooth 4-manifold $X$ are said to be \emph{exotic} (or \emph{exotically knotted}) if the surfaces are isotopic through homeomorphisms of $X$ but not through diffeomorphisms of $X$.  Much of the progress in studying knotted surfaces has been sparked by questions from algebraic geometry and symplectic topology. For example, if $X$ is a closed, simply connected K\"ahler surface, then any two smooth, closed complex curves that are homologous are in fact smoothly isotopic \cite[\S1]{fs:symplectic}. But if the surfaces are only required to be smooth, then there are many examples of infinite families of surfaces that are pairwise topologically but not smoothly isotopic. 
 The majority of such examples are surfaces of positive genus obtained using the ``rim surgery'' technique pioneered by Fintushel-Stern \cite{fs:surfaces} and further developed by several authors, though a handful of other constructions have arisen \cite{akbulut:2-spheres,akmr:stable,baykur-hayano,schwartz}.

This paper introduces a construction of exotically knotted surfaces that  pushes forward the state of the art  on three fronts: the minimal topology of the surfaces (including disks and spheres), the simplicity of the ambient 4-manifolds (including $B^4$), and the rigid geometry of the surfaces.  We offer three perspectives on the construction: (i) using handle diagrams, (ii) using movies of link diagrams, and (iii) using braid factorizations. 

We discuss the main topological results in \S1.1, followed by a more detailed discussion of the symplectic and complex settings in \S1.2-1.3. For additional  context, we outline our approach and compare it with previous constructions of exotic smooth and symplectic surfaces at the end of this introduction in \S1.4.

\titleformat{\subsection}[runin]{\bfseries}{}{0pt}{\thesubsection \ \  }

\subsection{Main topological results.} 

Our most fundamental result is motivated by the open question of whether there exist closed, orientable surfaces in $S^4$ that are exotically knotted. In the relative setting of properly embedded surfaces in $B^4$, we prove:

 \begin{mainthm}\label{thm:holomorphic}
For any integer $g \geq 0$, there exist infinitely many knots in $S^3$ that each bound a pair of  properly embedded, smooth, orientable surfaces of genus $g$ in $B^4$ that are isotopic through homeomorphisms  but not diffeomorphisms of $B^4$.  Moreover, these surfaces may be embedded holomorphically in $B^4 \subset \cc^2$.
 \end{mainthm}

 To the author's knowledge, Theorem~\ref{thm:holomorphic} provides the first known examples of smoothly exotic complex curves in any 4-manifold.  The construction is explicit enough that many examples can be drawn by hand.  These surfaces may also be capped off inside of larger 4-manifolds to produce exotically knotted closed surfaces, and the construction is easily extended to produce arbitrarily large finite collections of exotic surfaces.

\begin{ex-main} The ribbon-immersed disks in Figure~\ref{fig:explicit} give rise to exotic disks in $B^4$.\end{ex-main}

\begin{figure}
\center
\includegraphics[width=.95\linewidth]{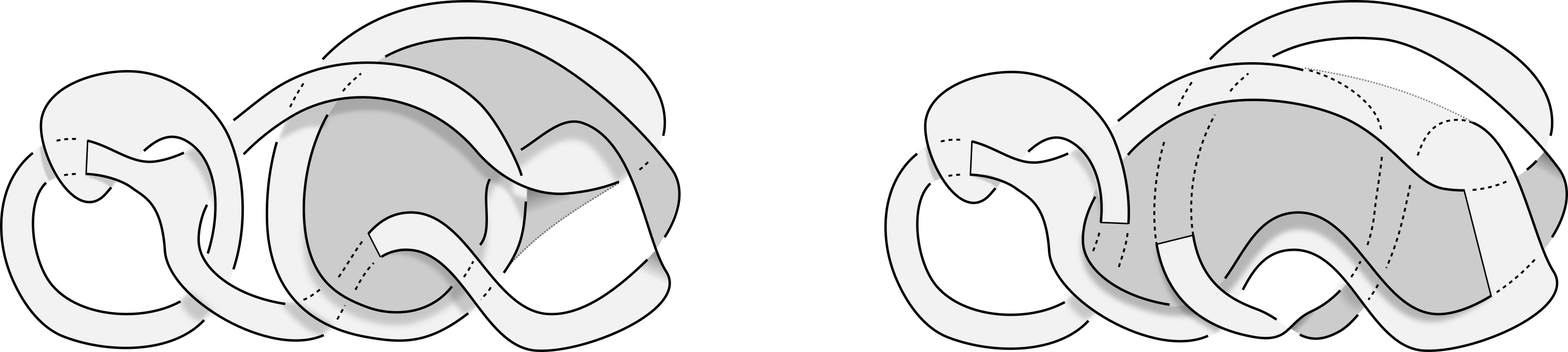}
\caption{An exotic pair of ribbon disks bounded by the same knot.}\label{fig:explicit}

\vspace{.1in}

\end{figure}

Constructions of exotically knotted surfaces typically constrain the fundamental group of the surface complement, called the \emph{knot group} of the surface. The knot group is usually required to be a ``good'' group  in the sense of surgery theory \cite{freedman-quinn,freedman-teichner:i,freedman-teichner:ii}, which ensures that the topological $s$-cobordism theorem holds.  This class  includes finite groups and $\zz$, whereas free groups of rank $\geq 2$ are \emph{not} expected to be good.  A wealth of nontrivial ``good'' groups have been realized as knot groups of exotic surfaces \cite{kim:twist,kim-ruberman:non-simply,kim-ruberman:trivial,hoffman-sunukjian,conway-powell:Z,torres}.  In contrast, we leverage  knotted surfaces in $B^4$  to produce exotic surfaces whose knot groups  are \emph{not} expected to be ``good''.

To state the result, we consider smooth  surfaces $\surf$ that are obtained by resolving \emph{cusps} of singular symplectic surfaces. These are singularities locally modeled on a complex curve $z^p + w^q =0$ in $B^4 \subset \cc^2$ for coprime integers $p,q \geq 2$, i.e., the cone on a $(p,q)$-torus knot. To resolve the surface, the singularity can be replaced with the standard smooth surface in $B^4$ bounded by the $(p,q)$-torus knot. (See \S\ref{subsec:cuspidal} for precise definitions.) Such singular surfaces are abundant in many symplectic 4-manifolds, and their resolutions necessarily intersect a 4-ball inside $X$ in a well-controlled subsurface of positive genus. 


\begin{mainthm}\label{thm:cuspidal}
Suppose  $\surf$ is a smooth surface obtained by resolving a cusp of a singular symplectic curve in a closed or convex symplectic 4-manifold $X$.\begin{enumerate}
\item[\normalfont \text{(a)}] If $\surf \cdot \surf \geq 0$, then there are infinitely many smoothly embedded surfaces in $X$ that are topologically but not smoothly isotopic to $\surf$.
\item [\normalfont \text{(b)}] Moreover, if $g(\surf)>1$ and the meridian to $\surf$ is a primitive element of finite order $d>1$ in $H_1(X \! \setminus \! \surf)$, then the  class $[\surf]  \in H_2(X)$ contains infinitely many pairwise exotic surfaces whose knot groups contain nonabelian free subgroups.
\end{enumerate}
\end{mainthm}

\smallskip

\begin{rem*}
Unlike the other main results, this theorem is obtained using rim surgery. It appears that similar phenomena can be produced by modifying our main construction, but the results do not apply as broadly.
\end{rem*}

\begin{ex-main} Each homology class of degree $d>3$ in $\cp$ is represented by a smooth surface $F$ of genus $g(F)>1$ with $H_1(\cp\setminus F)\cong \zz/d$,  obtained by resolving the cusps of a singular complex curve.  Hence these homology classes also contain exotic surfaces whose knot groups admit nonabelian free subgroups.
\end{ex-main}

Localized knotting also allows for the construction of exotic singular surfaces. While our main interest in such phenomena arises from the study of singular complex curves, we highlight an amusing corollary of Theorem~\ref{thm:holomorphic}: the existence of exotically knotted surfaces in $S^4$ in the \emph{piecewise-linear} category.

\begin{prop}\label{prop:PL}
For any integer $g \geq 0$, there exist  pairs of piecewise-linearly embedded, closed, orientable surfaces of genus $g$ in $S^4$ that are isotopic through (topological) homeomorphisms of $S^4$ yet not through piecewise-linear homeomorphisms of $S^4$.
\end{prop}



\vspace{-.15in}

\subsection{The symplectic setting.} Symplectic surfaces offer a compromise between the rigidity of closed complex curves and the flexibility of arbitrary smooth surfaces. In $\cp$, for example, it is conjectured that any two smoothly embedded symplectic surfaces in the same homology class are symplectically isotopic; to date, this has been proven for homology classes of degree at most 17 \cite{siebert-tian}. And in \emph{any} closed symplectic 4-manifold, a symplectic 2-sphere with self-intersection greater than $-2$ is unique up to isotopy in its homology class (cf \cite[Proposition~3.2]{li-wu}). 

However, the principle fails in general. Works of several authors, including Fintushel-Stern \cite{fs:symplectic} and Park-Poddar-Vidussi \cite{park-poddar-vidussi}, provide an array of symplectic 4-manifolds containing symplectic surfaces of positive genus that are homologous but non-isotopic; see \cite{etgu:survey} for a concise survey.  While many of these surfaces are distinguished by the fundamental groups of their complements, others require more sensitive tools from gauge theory.  Moreover, a subset of these surfaces can be constructed to have simply connected complements, implying that  they are  topologically isotopic \cite{freedman,boyer,sunukjian} and thus smoothly exotic.

 By extending the construction underlying Theorem~\ref{thm:holomorphic}, we produce a new construction of smoothly exotic symplectic surfaces, including the first such examples of 2-spheres and surfaces with nonzero self-intersection (cf~\cite[\S8]{etgu:survey}).

\begin{mainthm}\label{thm:symp}
For any integer $g \geq 0$, there exist infinitely many  convex symplectic 4-manifolds each of which contains a pair of closed symplectic surfaces of genus $g$  that are topologically isotopic yet are not equivalent under any ambient diffeomorphism.
\end{mainthm}

These also give the first examples of exotically knotted spheres in 4-manifolds with nonvanishing gauge-theoretic invariants; see \S\ref{subsec:compare} below for more context. It is straightforward to symplectically embed the 4-manifolds from Theorem~\ref{thm:symp} into \emph{closed} symplectic 4-manifolds. However, the main obstructive tools used in this paper (namely the adjunction inequality) are too coarse to distinguish the surfaces in this context. Though we expect that these symplectic surfaces typically remain exotic in closed symplectic 4-manifolds, we leave this problem open for future investigation.

\vspace{-.1in}

\subsection{The complex setting.}  The exotic complex curves  from Theorem~\ref{thm:holomorphic} can be used to construct  exotic complex curves in other 4-manifolds.  
For example, while there are no smoothly exotic complex curves in any \emph{closed}, simply connected K\"ahler surface, we can find non-compact examples in \emph{open} K\"ahler surfaces.

 \begin{mainthm}\label{thm:open}
There exist open, simply connected complex surfaces (that are Stein and therefore K\"ahler) containing exotically knotted, properly embedded complex curves. \end{mainthm}

We note that working in \emph{open} complex surfaces is a subtler affair than working in compact complex domains with boundary.  Indeed, the proof of Theorem~\ref{thm:holomorphic} does \emph{not} readily imply that interiors of the complex curves are smoothly exotic in the open 4-ball in $\cc^2$. In particular, the following fundamental question remains open.

 \begin{ques}Does $\cc^2$ contain smoothly exotic, properly embedded complex curves?
 \end{ques}

Shifting perspective, exotic surfaces can also be used to construct exotic 4-manifolds. Taking branched covers of $B^4$ over the holomorphic disks from Theorem~\ref{thm:holomorphic}, we obtain: 

\begin{mainthm}\label{thm:stein}
There exist contractible Stein domains with the same contact boundary that are homeomorphic but not diffeomorphic.
\end{mainthm}

To the author's knowledge, all previous examples of exotic contractible Stein domains are distinguished by their contact boundaries; among other things, this implies that they cannot be used for cut-and-paste constructions in the symplectic setting. Relaxing the condition on the algebraic topology of the filling, Akhmedov-Etnyre-Mark-Smith \cite{aems} produced the first examples of Stein fillings of a fixed contact 3-manifold that are homeomorphic but not diffeomorphic; see also \cite{ao:singularity}. While these examples have large second Betti numbers,   Akbulut-Yasui successfully produced smaller examples with $b_2=2$ \cite{akbulut-yasui:small-stein}. However, all of these examples rely on log transforms or the Fintushel-Stern knot surgery operation, hence generally require $b_2>1$. The novel approach using branched covers of complex disks avoids these restrictions on the algebraic topology of the  fillings.

\vspace{-.125in}
\subsection{Comparison with earlier work.}\label{subsec:compare} To date, all earlier methods of constructing smoothly exotic surfaces $\surf,\surf' \subset X$ suffer from at least two of the following three limitations: (1) the surfaces have positive genus, (2) the topological isotopy is only ensured when $X \setminus \surf$ and $X \setminus \surf'$ have finite cyclic fundamental group, and (3) the surfaces $\surf$  and $\surf'$ are not \emph{both} symplectic.

 Our underlying topological construction of pairs of exotic disks $D,D' \subset B^4$ is based on a variant of cork twisting, strengthening and systemizing an example of a related phenomenon found by Akbulut \cite{akbulut:zeeman}.  In contrast with rim surgery, the explicit nature of this construction makes it simple to describe the resulting surfaces using a sequence of link diagrams. Under suitable conditions, we can apply results from \cite{hayden:stein}  to realize the resulting disks $D$ and $D'$ symplectically in $(B^4,\omega_\st)$. We then obtain closed symplectic surfaces $\surf, \surf'$ by capping off these disks $D,D' \subset B^4$ inside a larger symplectic 4-manifold $X$.  The  complements $X \setminus \surf$ and $X\setminus \surf'$ are then distinguished using an adjunction inequality for embedded surfaces in Stein domains \cite{lisca-matic} and an analysis of the branched covers of $X$ along $\surf$ and $\surf'$.

In contrast, the earlier strategy for constructing and detecting smoothly exotic 2-spheres in \cite{akbulut:2-spheres,akmr:stable} is not well-suited to the symplectic setting, nor to the study of surfaces of positive genus. The underlying strategy, also applied  in \cite{ruberman:obstruction}, is based on the existence of exotic 4-manifolds $X$ and $X'$ whose blowups $X \# \cp$ and $X' \# \cp$ are diffeomorphic. If $\surf$ and $\surf'$ denote the 2-spheres given by $\mathbb{C}P^1$ inside the $\cp$-summand of $X\# \cp$ and $X' \# \cp$, respectively, then there is a homeomorphism from $X \#\cp$ to $X' \# \cp$ carrying $\surf$ to $\surf'$. However, any diffeomorphism  carrying $\surf$ to $\surf'$ would descend to a diffeomorphism of $X$ and $X'$ after blowing down the 2-spheres $\surf$ and $\surf'$ (which each have self-intersection $+1$), a contradiction.

In the symplectic setting, however, there can be no smoothly exotic symplectic 2-spheres of self-intersection $\pm 1$. Indeed, as mentioned above, any symplectic 2-sphere of self-intersection greater than $-2$ in a closed symplectic 4-manifold is unique up to isotopy in its homology class \cite[Proposition~3.2]{li-wu}. Moreover, it was pointed out to the author by T.-J. Li \cite{li:email} that this result can be extended to symplectic 4-manifolds with convex boundary using the tools surveyed in \cite[\S9]{wendl:book}. In theory, it remains possible that a pair of symplectic 2-spheres in a 4-manifold $Z$ could be distinguished by a \emph{rational} blowdown, which consists of replacing a tubular neighborhood of the 2-sphere with a rational homology ball that has the same boundary \cite{fs:blowdown}. Such a rational homology ball exists precisely when the 2-sphere has self-intersection $\pm1$ or $\pm 4$. While the case of self-intersection $-4$ is not ruled out above, the manner in which the gauge-theoretic invariants of $Z$ determine those of its rational blowdowns makes it difficult to distinguish the resulting 4-manifolds.

\begin{rem}
A notable  paper on exotic surfaces by Juh\'asz, Miller, and Zemke \cite{jmz:exotic}  appeared while the later sections of this paper were in preparation. Using twisted rim surgery, they produce infinite collections of smooth  surfaces of positive genus in $B^4$ that are topologically isotopic rel boundary yet are not ambiently diffeomorphic.  These surfaces are distinguished using the maps they  induce on knot Floer homology. Our approaches differ in fundamental ways that warrant further investigation. In particular, it can be shown that the  obstruction in \cite{jmz:exotic} cannot distinguish pairs of exotically knotted complex curves, such as those arising in Theorem~\ref{thm:holomorphic}.
\end{rem}

\emph{Organization.} We begin in \S\ref{sec:simple} by explaining the topological construction underlying our main results, giving examples of exotically knotted slice disks in $B^4$. In \S\ref{sec:symp}, we adapt the main construction to the symplectic setting and prove a symplectic version of Theorem~\ref{thm:holomorphic} from which most of the main results are derived. In \S\ref{sec:complex}, we shift to the complex setting, deriving Theorems~\ref{thm:holomorphic}, \ref{thm:open}, and \ref{thm:stein}. Finally, we return to the smooth setting in \S\ref{sec:closed}, studying surfaces in closed 4-manifolds and proving Theorem~\ref{thm:cuspidal}.

\smallskip

\emph{Acknowledgements.} I wish to thank Lisa Piccirillo for several  key conversations,  and Patrick Orson for sharing  technical insights, including the Alexander trick. Thanks also to Inanc Baykur, Anthony Conway, John Etnyre, Dave Gay, Bob Gompf, Tian-Jun Li, and Danny Ruberman for helpful exchanges, and to Frank Swenton for developing and maintaining the Kirby Calculator \cite{klo}, which greatly simplified the experimental phase of this project. This work was supported by NSF grant DMS-1803584.

\vspace{-.35in}

\section{The topological construction}\label{sec:simple}

In this section, we outline the topological construction underlying our pairs of smoothly exotic surfaces. For  variety, we distinguish the examples in this section using an obstruction from knot Floer homology; the proofs of Theorems~\ref{thm:holomorphic}-\ref{thm:symp} given later on use branched covers and  the adjunction inequality for surfaces in Stein domains.

\begin{figure}[b]\center
\smallskip
\def\svgwidth{.925\linewidth}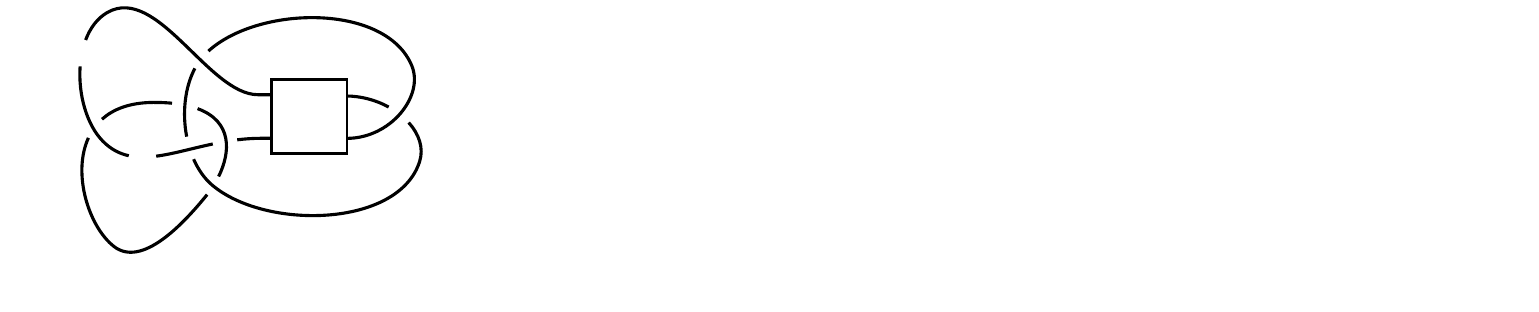
\caption{Disks $D,D' \subset B^4$ associated to the link of unknots $A \cup B \cup C$.}\label{fig:first-link}
\end{figure}

 To begin, let $L$ be a three-component link of unknots $A$, $B$, and $C$ such that $A \cup B$ is a Hopf link and each of the links $A \cup C$ and $B \cup C$ is a two-component unlink. For a running example, see Figure~\ref{fig:first-link}(a); in this and all other figures, a boxed integer indicates positive full twists. We may view $C$ as a knot $K \subset S^3$ using the non-standard surgery description of $S^3$ given by zero-surgery on both $A$ and $B$. Moreover, by viewing $A$ as a dotted circle (i.e.,~carving out a neighborhood of a standard slice disk for $A$ from $B^4$ to yield $S^1 \times B^3$ \cite{GompfStipsicz4}) and attaching a zero-framed 2-handle along $B$, we obtain a non-standard handle diagram for $B^4$; see Figure~\ref{fig:first-link}(b).

Observe that, since $K$ is unknotted in this diagram and does not run over the 1-handle, it naturally bounds a slice disk in $B^4$. More precisely, the  link component $C$ underlying $K$ is split from $A$ (when ignoring $B$), so $C$ bounds an embedded disk in $S^3 \setminus A$ that is unique up to isotopy. The interior of this disk can be pushed into the interior of $S^1 \times B^3$, i.e.,~the exterior of the slice disk for $A$. The resulting disk is disjoint from the 2-handle attached along $B$, giving rise to the desired disk $D \subset B^4$ bounded by $K \subset S^3$. By hypothesis, we may reverse the roles of $A$ and $B$ in this construction, yielding another disk $D'$ in $B^4$ bounded by $K$; see Figure~\ref{fig:first-link}(c).

To draw the resulting knots in the standard diagram for $S^3$, we slide $C$ over the 2-handle until the 1- and 2-handles can be canceled. For example, after isotopy, the knots determined by Figure~\ref{fig:first-link} are drawn in Figure~\ref{fig:simple-knot}(a-b).

\begin{figure}\center
\def\svgwidth{.8\linewidth}
\begingroup%
  \makeatletter%
  \providecommand\color[2][]{%
    \errmessage{(Inkscape) Color is used for the text in Inkscape, but the package 'color.sty' is not loaded}%
    \renewcommand\color[2][]{}%
  }%
  \providecommand\transparent[1]{%
    \errmessage{(Inkscape) Transparency is used (non-zero) for the text in Inkscape, but the package 'transparent.sty' is not loaded}%
    \renewcommand\transparent[1]{}%
  }%
  \providecommand\rotatebox[2]{#2}%
  \newcommand*\fsize{\dimexpr\f@size pt\relax}%
  \newcommand*\lineheight[1]{\fontsize{\fsize}{#1\fsize}\selectfont}%
  \ifx\svgwidth\undefined%
    \setlength{\unitlength}{907.45798667bp}%
    \ifx\svgscale\undefined%
      \relax%
    \else%
      \setlength{\unitlength}{\unitlength * \real{\svgscale}}%
    \fi%
  \else%
    \setlength{\unitlength}{\svgwidth}%
  \fi%
  \global\let\svgwidth\undefined%
  \global\let\svgscale\undefined%
  \makeatother%
  \begin{picture}(1,0.31355333)%
    \lineheight{1}%
    \setlength\tabcolsep{0pt}%
    \put(0,0){\includegraphics[width=\unitlength,page=1]{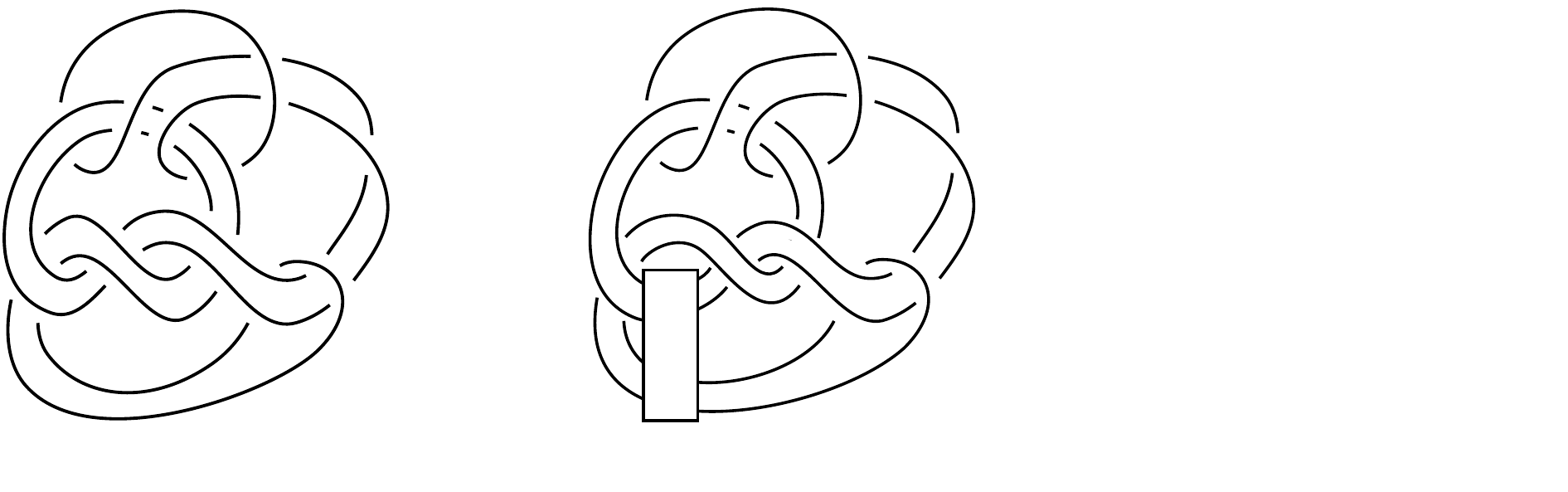}}%
    \put(-0.03483356,0.22085556){\makebox(0,0)[lt]{\lineheight{1.25}\smash{\begin{tabular}[t]{l}$K_0$\end{tabular}}}}%
    \put(0,0){\includegraphics[width=\unitlength,page=2]{simpledisk-fam.pdf}}%
    \put(0.41750276,0.0863734){\makebox(0,0)[lt]{\lineheight{1.25}\smash{\begin{tabular}[t]{l}\text{\footnotesize$m$}\end{tabular}}}}%
    \put(0.32832046,0.21754957){\makebox(0,0)[lt]{\lineheight{1.25}\smash{\begin{tabular}[t]{l}$K_m$\end{tabular}}}}%
    \put(0,0){\includegraphics[width=\unitlength,page=3]{simpledisk-fam.pdf}}%
    \put(0.76918716,0.02745796){\makebox(0,0)[lt]{\lineheight{1.25}\smash{\begin{tabular}[t]{l}$\ell$\end{tabular}}}}%
    \put(0,0){\includegraphics[width=\unitlength,page=4]{simpledisk-fam.pdf}}%
    \put(0.10005794,0.00334464){\makebox(0,0)[lt]{\lineheight{1.25}\smash{\begin{tabular}[t]{l}(a)\end{tabular}}}}%
    \put(0.47345862,0.00334464){\makebox(0,0)[lt]{\lineheight{1.25}\smash{\begin{tabular}[t]{l}(b)\end{tabular}}}}%
    \put(0.84879305,0.00334464){\makebox(0,0)[lt]{\lineheight{1.25}\smash{\begin{tabular}[t]{l}(c)\end{tabular}}}}%
  \end{picture}%
\endgroup%

\caption{A family of slice knots $K_m$ and a distinguished loop $\ell$ in $S^3 \setminus K_0$.}\label{fig:simple-knot}
\end{figure}

Our goal is to produce disks that are topologically but not smoothly isotopic. When the disks' complements have infinite cyclic fundamental group, the topological isotopy is furnished by  a result of Conway and Powell. 

\begin{thm}[{\cite[Theorem~1.1]{conway-powell}}]\label{thm:iso}
Any pair of disks $D, D' \subset B^4$ with the same boundary and $\pi_1(B^4\setminus D) \cong \pi_1(B^4 \setminus D) \cong \zz$ are topologically isotopic rel  boundary. 
\end{thm}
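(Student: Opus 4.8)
The plan is to reduce the statement to a classification result for topological surfaces with prescribed fundamental group of the complement, in the spirit of Freedman's disk theorem and its descendants. First I would set up the relevant invariants: for a ribbon disk $D \subset B^4$ with $\pi_1(B^4 \setminus D) \cong \zz$, the exterior $E_D = B^4 \setminus \nu(D)$ is a compact topological $4$-manifold with boundary the zero-surgery $S^3_0(\partial D)$ (or rather $S^3$ with a solid torus reglued), and with $\pi_1(E_D) \cong \zz$ generated by a meridian. The key homological input is that $E_D$ has the $\zz$-homology of $S^1 \times B^3$: this follows from a Mayer--Vietoris / Alexander duality computation using that $D$ is a disk (so $H_*(D)$ is that of a point), exactly as in the classical knot-exterior case. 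I would record that $H_*(E_D; \zz[\zz])$ vanishes in positive degrees — i.e.\ the Alexander module is trivial — precisely because $\pi_1 \cong \zz$ forces the infinite cyclic cover to be homologically that of a point; a half-lives-half-dies or Milnor exact sequence argument pins this down.

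Next I would invoke topological surgery theory in the good fundamental group $\zz$ (Freedman--Quinn) to promote this homological data to a homeomorphism. Concretely, the goal is to show that any two such exteriors $E_D, E_{D'}$ with the same boundary identification are homeomorphic rel boundary, and moreover that the homeomorphism can be chosen to respect the meridional structure so that it extends across the reglued tubular neighborhoods to a homeomorphism of pairs $(B^4, D) \to (B^4, D')$. This is where I would lean on the cited work of Conway--Powell (Theorem~1.1 there): their result should either directly give the rel-boundary topological isotopy of ribbon disks with $\zz$ complement, or provide the $4$-dimensional $s$-cobordism / classification statement that does so. The structure of the argument is: (i) both exteriors are homotopy equivalent rel boundary to $S^1 \times B^3$ with the correct boundary behavior — here the Wall self-intersection / surgery obstructions vanish since $\pi_1 = \zz$ has trivial Whitehead group and the relevant $L$-groups are controlled; (ii) an $s$-cobordism between the exteriors exists and, by Freedman's $s$-cobordism theorem for good groups, is a product; (iii) the product structure yields the homeomorphism rel boundary.

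The final step is to upgrade "homeomorphic rel boundary as a pair" to "topologically isotopic rel boundary." For properly embedded disks in $B^4$ this is where one uses that a homeomorphism of $B^4$ fixing the boundary is topologically isotopic to the identity — the topological (Alexander-type) isotopy extension, plus the fact that $\partial$-fixing homeomorphisms of $B^4$ form a connected group in the topological category. I'd combine the pair-homeomorphism from the surgery step with this to slide $D'$ onto $D$ through an ambient topological isotopy rel $\partial$. I expect the main obstacle to be step two: verifying that the surgery-theoretic obstructions genuinely vanish in this rel-boundary setting and that the $s$-cobordism can be built with the right boundary data — in particular, controlling the secondary (Whitehead torsion and normal invariant) obstructions so that the cobordism is an \emph{$s$-cobordism} and not merely an $h$-cobordism. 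This is precisely the technical heart of the Conway--Powell argument, so in practice I would cite their Theorem~1.1 for this step rather than reproduce it, and devote the exposition to checking that our ribbon disks satisfy their hypotheses (ribbon-ness ensures the complement is built from $1$- and $2$-handles, hence the $\pi_1 \cong \zz$ hypothesis is the only condition to verify, which holds by construction in the examples of Theorem~\ref{thm:ball}).
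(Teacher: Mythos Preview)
This theorem is not proven in the paper; it is quoted from Conway--Powell (Theorem~1.1 there) and used as a black box. Your sketch of the underlying surgery-theoretic argument is reasonable and your conclusion --- to cite Conway--Powell for the technical heart --- matches the paper's treatment exactly; the one ingredient the paper independently flags (in the acknowledgements) is the Alexander trick you invoke in your final step, which upgrades a rel-boundary pair homeomorphism of $(B^4,D)$ to an ambient topological isotopy rel boundary.
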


We note that the results in \cite{conway-powell} apply to \emph{homotopy ribbon} disks, i.e., $D \subset B^4$ such that the inclusion $S^3 \setminus \partial D \hookrightarrow B^4 \setminus D$  induces a surjection on fundamental groups. When $\pi_1(B^4 \setminus D)$ is infinite cyclic, this inclusion-induced map is always surjective.

To distinguish our disks up to ambient diffeomorphism, we show that there is a knot in $ S^3 \setminus K$ that bounds a smoothly embedded, once-punctured torus  in $B^4 \setminus D'$ but not in $B^4 \setminus D$ (even up to ambient diffeomorphism). In $B^4 \setminus D$, we obstruct such a surface indirectly  using the invariant $\tau$ derived from knot Floer homology \cite{oz-sz:genus}.  To describe the obstruction, we recall that the \emph{$n$-trace} of a knot $K$ in $S^3$ is the 4-manifold $X_n(K)$ obtained by attaching an $n$-framed 2-handle to $B^4$ along $K$. The \emph{$n$-shake genus} $g^n_{sh}(K)$ of $K$ is defined to be the minimal genus of a smoothly embedded surface generating the second homology of $X_n(K)$. It can be shown that $\tau$ is a lower bound on the zero-shake genus; see Remark~4.10 or Theorem~1.6 of \cite{hmp:mazur}.

\begin{thm}[{cf~\cite{hmp:mazur}}]\label{thm:bound}
If $K$ is a knot in $S^3$, then $|\tau(K)| \leq g^0_{sh}(K)$.
\end{thm}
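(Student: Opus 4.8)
The plan is to derive the inequality $|\tau(K)| \leq g^0_{sh}(K)$ by exploiting the fact that the generator of $H_2(X_0(K))$ is represented by a capped-off Seifert-type surface, together with the behavior of $\tau$ under the relevant cobordisms. First I would fix a smoothly embedded, closed (or rather, properly embedded and closed-in-$X_0(K)$) surface $\Sigma \subset X_0(K)$ of genus $g = g^0_{sh}(K)$ generating $H_2(X_0(K)) \cong \zz$. Since the $0$-framed $2$-handle is attached along $K$, the surface $\Sigma$ can be isotoped so that it meets $B^4$ in a properly embedded surface $F \subset B^4$ with $\partial F = K$, and meets the $2$-handle in its core disk; then $g(\Sigma) = g(F)$, so it suffices to bound the genus of any such $F \subset B^4$ with $\partial F = K$ from below by $|\tau(K)|$. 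Here the subtlety is that $F$ need not be connected or orientable in general, but since $\Sigma$ generates $H_2$ and the $2$-handle contributes the surviving homology, one reduces to the case where $F$ is connected and orientable (a disjoint closed component can be capped off or discarded, lowering genus, and the orientability is forced by the fact that $\Sigma$ is orientable once we work with $\zz$ rather than $\zz/2$ coefficients — this is exactly the content of the standard trace/shake-genus normalization).

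Next I would invoke the genus bound for $\tau$ coming from knot Floer homology. The classical Ozsv\'ath–Szab\'o result \cite{oz-sz:genus} gives $|\tau(K)| \leq g_4(K)$, the smooth slice genus, but here $F$ is not a priori a minimal-genus slice surface and may live in a trace rather than $B^4$; the point of Theorem~\ref{thm:bound} is precisely that the bound persists at the level of shake genus. So the key step is the cobordism argument: attaching the $0$-framed $2$-handle to $B^4$ along $K$ and then capping with the core disk produces, after removing a small ball, a genus-$g$ cobordism in $S^3 \times I$ (or $B^4$) from $K$ to the unknot, and the crossing-change / cobordism inequality for $\tau$ — namely that a genus-$g$ connected oriented cobordism between knots $K_0, K_1$ satisfies $|\tau(K_0) - \tau(K_1)| \leq g$ — yields $|\tau(K)| = |\tau(K) - \tau(\text{unknot})| \leq g$. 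The one thing to be careful about is the framing: it is exactly the $0$-framing that makes the core disk cap off homologically trivially without introducing a Hopf-band–type correction to $\tau$; for nonzero framings the analogous statement would involve $\tau$ of a cable or a shifted bound, which is why the theorem is stated for the zero-trace.

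The main obstacle I anticipate is handling the non-orientable and disconnected cases cleanly — i.e., making precise the reduction from ``a surface generating $H_2(X_0(K))$'' to ``a connected oriented genus-$g$ surface in $B^4$ bounded by $K$'' — since the definition of shake genus a priori allows surfaces that wrap over the $2$-handle multiple times or that are non-orientable, and one must verify that such surfaces can always be traded for ones of no greater genus in the standard form. This is the technical heart of \cite{hmp:mazur} (their Remark~4.10 / Theorem~1.6), and in a self-contained writeup I would either cite that normalization directly or sketch the Haken-type/innermost-disk surgery argument that removes excess intersections with the belt sphere. Once the surface is in standard form, the $\tau$-cobordism inequality finishes the proof immediately, so the only real work is this normalization step; everything downstream is a direct appeal to the functoriality and genus bounds for $\tau$ already in the literature.
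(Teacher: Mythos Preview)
Note first that the paper does not supply its own proof of this statement: it is quoted from \cite{hmp:mazur} (see the sentence preceding the theorem and the ``cf'' in its heading), so there is no in-paper argument to match against.

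Your proposal, however, has a genuine gap at the ``normalization'' step. You want to replace a genus-$g$ generator $\Sigma$ of $H_2(X_0(K))$ by one meeting the $2$-handle in a single core disk---equivalently, by a capped genus-$g$ slice surface for $K$. If that could always be done without raising the genus it would prove $g^0_{sh}(K)=g_4(K)$ for every knot, which is a well-known open problem; that is certainly not what \cite{hmp:mazur} establishes. The innermost-disk surgery you invoke does not apply here, since the cocore of the $2$-handle meets $\Sigma$ transversely in isolated points rather than circles, and cancelling those points in pairs is a Whitney-trick problem in dimension four. Your fallback cobordism sketch has the same defect: deleting a ball from $X_0(K)$ yields a cobordism from $S^3$ to $S^3_0(K)$, not a piece of $S^3\times I$ or of $B^4$, so the inequality $|\tau(K_0)-\tau(K_1)|\le g$ for knot cobordisms in $S^3\times I$ is unavailable unless the problematic standard form has already been assumed.

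The argument that actually works avoids any reduction to a slice surface. One simply pushes the closed surface $\Sigma$ off the $0$-handle so that it lies in the $2$-handle cobordism $W=X_0(K)\setminus\mathring{B}^4$ from $S^3$ to $S^3_0(K)$, where $[\Sigma]$ still generates $H_2(W)\cong\zz$ and $[\Sigma]^2=0$. The Ozsv\'ath--Szab\'o adjunction inequality for cobordism maps then forces $F_{W,\mathfrak{s}}=0$ once $|\langle c_1(\mathfrak{s}),[\Sigma]\rangle|$ is large relative to $g(\Sigma)$, while $\tau(K)$ is characterized precisely by which of these maps are nonzero; combining the two gives $|\tau(K)|\le g(\Sigma)$ directly. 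This is the same mechanism as in the original proof that $|\tau|\le g_4$, applied to an arbitrary closed generator rather than a capped slice surface---no normalization is needed, or possible.
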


These tools in hand, we can produce our first examples of exotic slice disks.

\begin{thm}\label{thm:disks}
There exist infinitely many pairs of smooth embeddings of the disk into $B^4$ that are isotopic through homeomorphisms but not diffeomorphisms of $B^4$.
\end{thm}

\begin{proof}
Fix $m \in \zz$ and let $D,D' \subset B^4$ denote the corresponding slice disks depicted in Figure~\ref{fig:first-link}. We claim that $D$ and $D'$ are topologically isotopic rel boundary for all $m \in \zz$ but are smoothly inequivalent for $m=0$ and $m \ll 0$.

\begin{figure}\center
\bigskip
\def\svgwidth{.9\linewidth}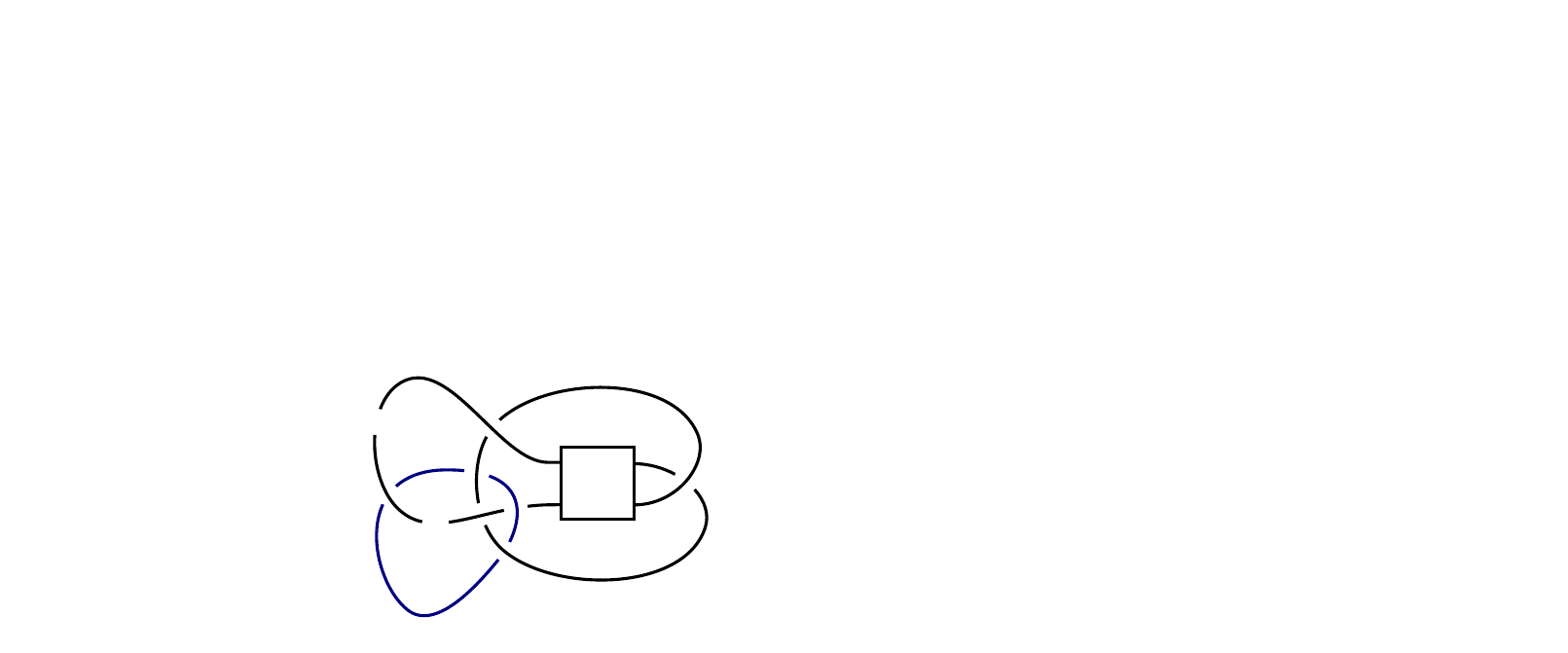
\caption{Handle diagrams for the slice disk exteriors, decorated to simplify the calculation of their fundamental groups.}\label{fig:first-exteriors}
\end{figure}

By construction, these slice disks have the same boundary $K\subset S^3$. Therefore, to establish the topological isotopy using Theorem~\ref{thm:iso}, it suffices to show  $\pi_1(B^4 \setminus D) \cong \pi_1(B^4 \setminus D') \cong \mathbb{Z}$. The disk exteriors are represented by the handle diagrams in Figure~\ref{fig:first-exteriors}. On the right, we have redrawn the diagrams and decorated them with oriented generators of the fundamental group. By tracing the 2-handle curves starting at their bottom-rightmost points, we obtain the following presentations:
\begin{align*}
\pi_1(B^4 \setminus D) &= \langle x,y \mid x y^{-1}x^{-1}y y^{-1}y^{-1}y=1 \rangle = \langle x,y \mid x y^{-1}x^{-1} =1\rangle \cong \zz \\ 
\pi_1(B^4 \setminus D') &= \langle x,y \mid x^{-1}y x y^{-1}yyy^{-1} =1\rangle= \langle x,y \mid x^{-1}y x  =1\rangle \cong \zz.
\end{align*}
Applying Theorem~\ref{thm:iso}, we see that $D$ and $D'$ are topologically isotopic rel boundary.

\begin{figure}[h]\center
\def\svgwidth{.7\linewidth}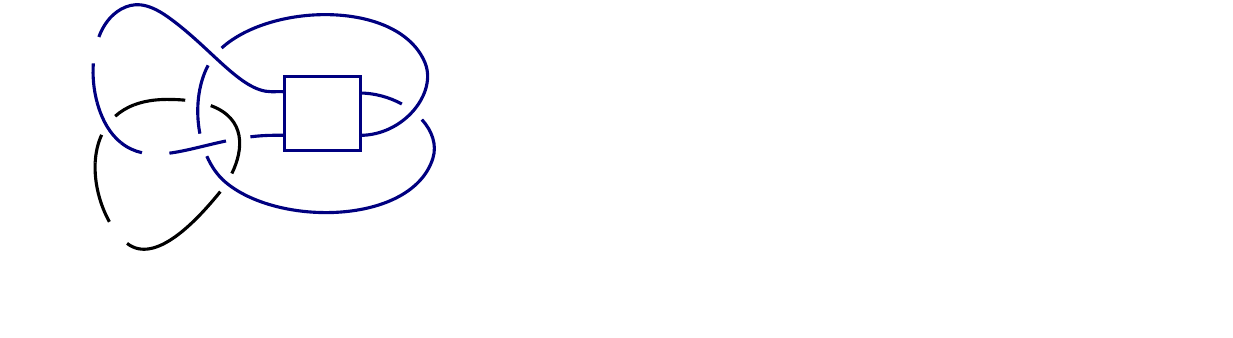
\caption{Tracking $\gamma$ under a diffeomorphism of the disk exteriors' boundaries.}
\label{fig:first-compare}
\end{figure}

For the sake of contradiction, suppose there is a diffeomorphism $f$ of $B^4$ carrying $D$ to $D'$. The restriction of $f$ to $S^3$ fixes $K$ setwise, so $f$ may be isotoped to fix a tubular neighborhood $N(K)$ and its exterior $S^3 \setminus \mathring{N}(K)$ setwise. (This isotopy is the identity on $K$ and extends to an isotopy of $B^4$ that is supported in a neighborhood of $S^3 \setminus \mathring{N}(K) \subset B^4$.)  Moreover, a closer examination of the hyperbolic geometry of $S^3 \setminus K$ shows that we may further isotope $f$ so that it restricts to the identity outside  a slightly larger tubular neighborhood of $K$; see Lemma~\ref{lem:hyperbolic} below.

Now let $g$ denote the diffeomorphism of the disk exteriors induced by $f$. If $m=0$ or $m \ll 0$, then $g$ sends the knot $\gamma$ in the boundary of $B^4 \setminus \mathring{N}(D)$ in Figure~\ref{fig:first-compare} to the knot in the boundary of $B^4 \setminus \mathring{N}(D')$ shown on the right. It is clear that the latter bounds an embedded once-punctured torus in $B^4 \setminus \mathring{N}(D')$. We claim that $\gamma$ does not bound such a surface in $B^4 \setminus \mathring{N}(D)$. To see this, we construct a new 4-manifold $X$ by attaching a pair of 2-handles to $B^4 \setminus \mathring{N}(D)$ as shown in Figure~\ref{fig:first-trace}, one of which is attached along $\gamma$. After performing the handle calculus in Figure~\ref{fig:first-trace}, we see that $X$ is the zero-trace of a knot that we will denote by $J_m$.

\begin{figure}\center
\smallskip
\smallskip
\def\svgwidth{.9\linewidth}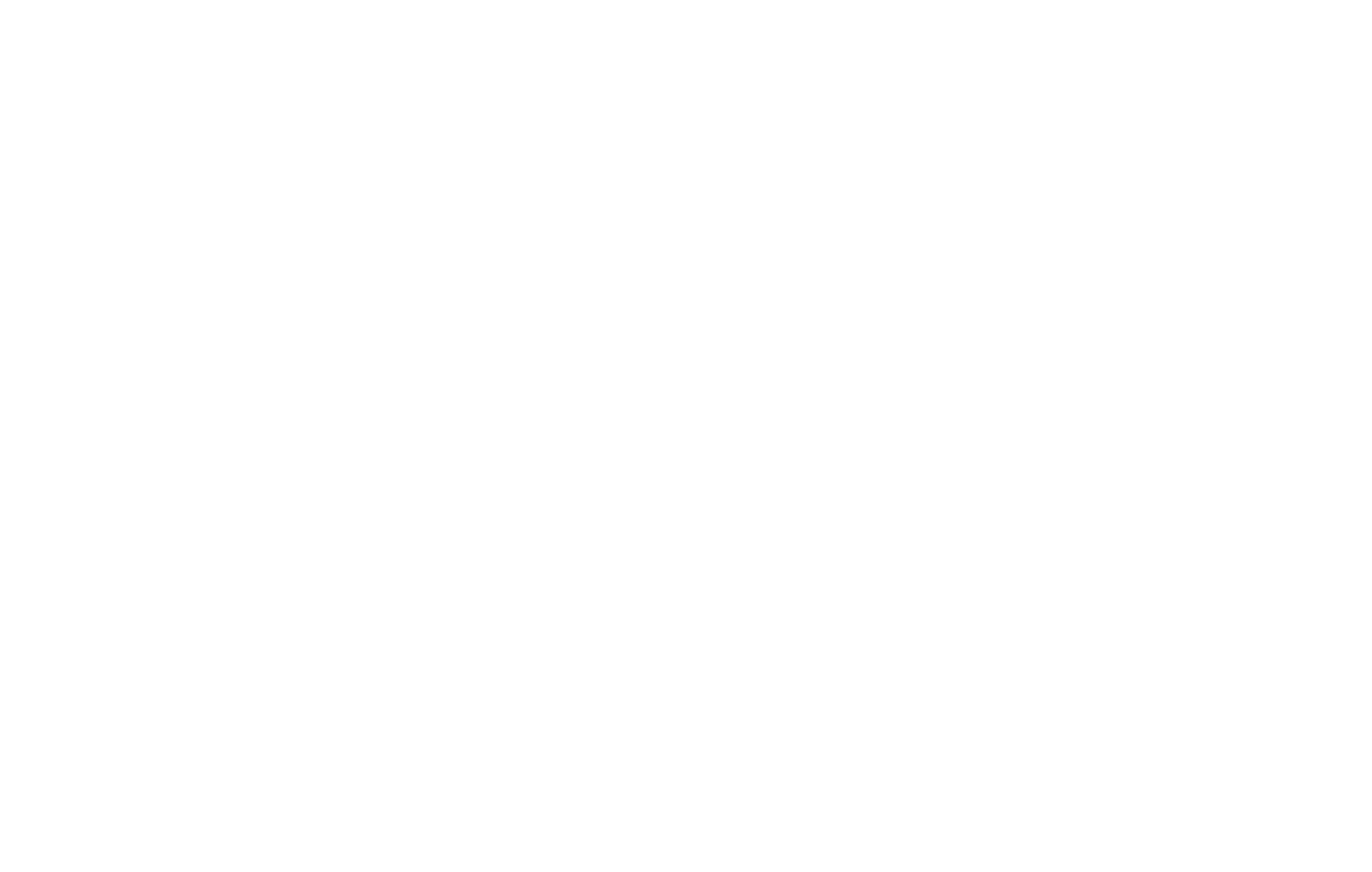
\caption{The 4-manifold $X$ from the proof of Theorem~\ref{thm:disks} is shown in part (a). Passing from (a) to (b) corresponds to a handleslide; (b)-(c) is handle cancellation and minor isotopy; (c)-(d) is isotopy; (d)-(e) consists of three handleslides and a handle cancellation, yielding the zero-trace of the knot $J_m$. Part (f) depicts $J_0$, which is obtained from $J_m$ by removing the $m$ positive twists.}\label{fig:first-trace}
\end{figure}

If $\gamma$ bounds a once-punctured torus in $B^4 \setminus \mathring{N}(D)$, then this can be capped off with the core of the 2-handle attached along $\gamma$ to yield a \emph{closed} torus  in $X=X_0(J_m)$. Looking at Figure~\ref{fig:first-trace}(a-d), it is easy to see that such a surface represents a generator of $H_2(X)$. However,  we claim that no such torus can exist. To see this, we note that $J_m$ is obtained from $J_0$ by adding $m$ positive full-twists along a pair of oppositely-oriented strands, hence $J_m$ can be turned into $J_0$ by performing $m$ positive-to-negative crossing changes. By \cite[Corollary~1.5]{oz-sz:genus} (or \cite[Corollary~3]{livingston:tau}), it follows that $\tau(J_m) \geq \tau(J_0)$ for all $m \leq 0$. Next, by a direct calculation using \cite{hfk-calc}, we calculate $\tau(J_0)=2$; see \cite{hayden:doc} for additional documentation. This implies that $\tau(J_m) \geq 2$ for all $m \leq 0$, hence Theorem~\ref{thm:bound} implies that the second homology of $X_0(J_m)$ cannot be generated by an embedded torus; this provides the desired contradiction. \end{proof}

\begin{rem*}
For exotic surfaces of all genera in $B^4$, see the proof of Theorem~\ref{thm:tbd} in \S\ref{sec:symp}.
\end{rem*}

We conclude by verifying the technical lemma invoked in the proof above.

\begin{lem}\label{lem:hyperbolic}
Let $K=K_m$ denote the knot in $S^3$ defined above for $m \in \zz$. If $m=0$ or $m \ll0$, then every self-diffeomorphism of $S^3$ carrying $K$ to itself can be assumed to fix the knot exterior $S^3 \setminus \mathring{N}(K)$.
\end{lem}

\begin{proof}
The knot complement $S^3 \setminus K$ is obtained from $S^3 \setminus K_0$ by performing $-1/m$-framed Dehn surgery along the knot $\ell$ depicted in Figure~\ref{fig:simple-knot}(c).  Using SnapPy and Sage \cite{snappy,sagemath}, we verify that $S^3 \setminus K_0$ and $S^3 \setminus (K_0 \cup \ell)$ admit  hyperbolic structures with trivial isometry groups; see \cite{hayden:doc} for additional documentation regarding this calculation. Thus for $m=0$ and $m \ll 0$, the knot complement $S^3 \setminus K$  admits a hyperbolic structure with trivial isometry group; see \cite[Lemma~2.2]{dhl}. 

By \cite{gabai:smale}, every self-diffeomorphism of a hyperbolic 3-manifold is isotopic to an isometry, hence every self-diffeomorphism of $S^3 \setminus K$ is isotopic to the identity; we remark that the earlier results in \cite{hatcher:large,ivanov} can be used in place of \cite{gabai:smale}  for the case of knot complements.  Now observe that any self-diffeomorphism of $S^3$ carrying $K$ to itself may be isotoped to fix a tubular neighborhood $N(K)$ and its exterior $S^3 \setminus \mathring{N}(K)$ setwise. (This isotopy is the identity on $K$, and we note that it extends to an isotopy of $B^4$ that is supported in a neighborhood of $S^3 \setminus \mathring{N}(K) \subset B^4$.)  Hence we may further isotope any self-diffeomorphism of $S^3$ that fixes $K$ setwise so that it restricts to the identity on $S^3 \setminus \mathring{N}(K)$ for $m=0$ or $m \ll 0$, as claimed.
\end{proof}

\vspace{-.4in}

\section{The symplectic setting}\label{sec:symp}

In this section, we adapt the topological construction from \S\ref{sec:simple} to the symplectic  setting, using the family of links shown in Figure~\ref{fig:main-link}(a). For the necessary background on contact, symplectic, and Stein manifolds,  we refer the reader to \cite{os:surgery,gompf:stein,ec:book}.

 The main theorems will be derived from the following:

\begin{thm}\label{thm:tbd}
For any integer $g \geq 0$, there exist infinitely many transverse knots in $(S^3,\xi_\st)$ that each bound a pair of properly embedded symplectic surfaces of genus $g$ in $(B^4,\omega_\st)$ that are isotopic through homeomorphisms of $B^4$ yet not through diffeomorphisms of $B^4$.
\end{thm}

\begin{figure}[h]\center
\smallskip
\def\svgwidth{.75\linewidth}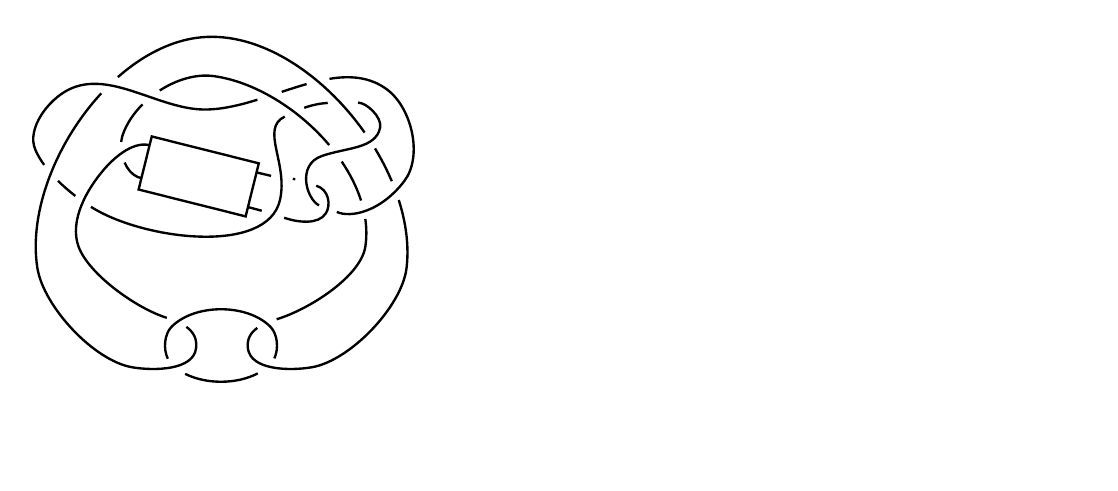
\caption{(a) A family of links satisfying the conditions from \S\ref{sec:simple}. (b) The resulting family of slice knots $K_m$. (The curve $\gamma$ will be used to study the slice disks bounded by $K_m$.)}\label{fig:main-link}
\end{figure}

Our primary tool for constructing symplectic surfaces in $(B^4,\omega_\st)$  is the following lemma.  It has likely been known to experts for some time, forming a symplectic analog of  a construction of Lagrangian cobordisms due to Ekholm-Honda-K\'alm\'an \cite{ehk:cobordisms} and Rizell \cite{rizell:surgery}; see also Theorem~4.2 and Figure~5 of \cite{bst:construct}. For a proof, see  Example~4.7 and Lemma~5.1  of \cite{hayden:stein} (cf \cite[Lemma 2.7]{etnyre-golla}).

\begin{lem}\label{lem:build}
If a smooth surface $\surf$ in $B^4$ can be presented by a sequence of transverse links  in $(S^3,\xi_\st)$ related by transverse isotopy and the diagram moves in Figure~\ref{fig:moves}, then $\surf$ is smoothly isotopic rel boundary to a symplectic surface in $(B^4,\omega_\st)$.
\end{lem}

\begin{figure}[h]\center
\def\svgwidth{.8\linewidth}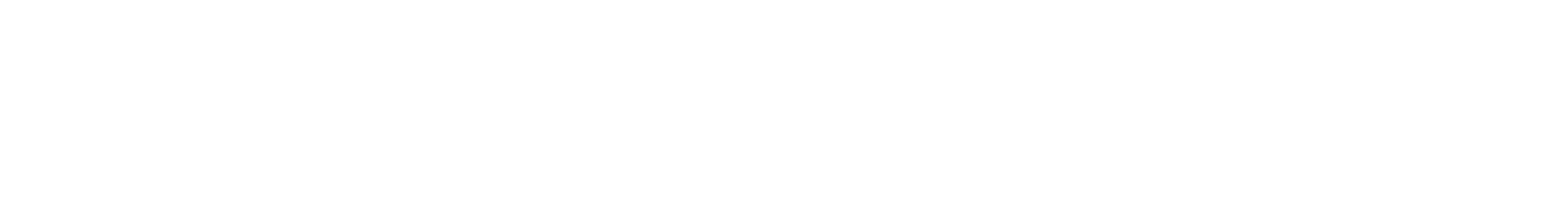
\caption{Elementary birth and saddle moves  between transverse link diagrams.}\label{fig:moves}
\end{figure}

For an example illustrating Lemma~\ref{lem:build}, see Figure~\ref{fig:torus}, which shows that the standard slice surface for the $T_{2,n}$ torus link can be realized as a symplectic surface in $(B^4,\omega_\st)$. 

\begin{figure}[b]\center
\def\svgwidth{.73\linewidth}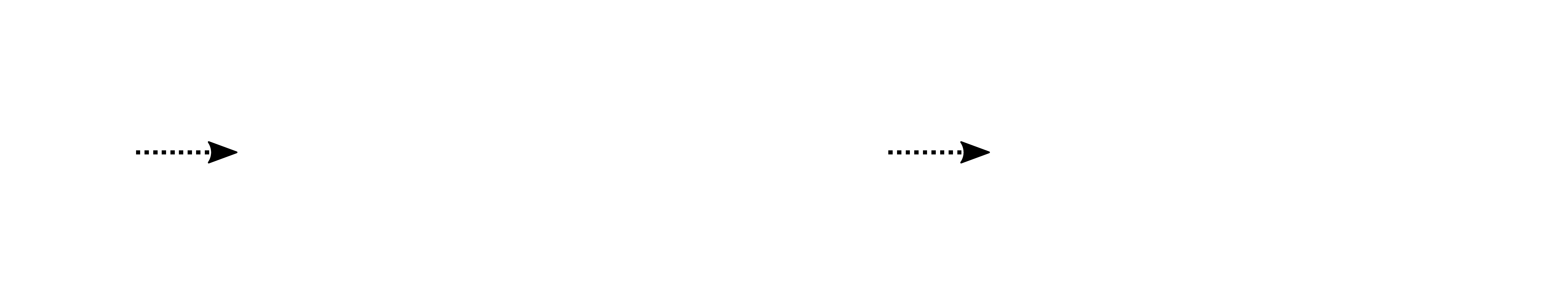
\caption{Constructing a symplectic surface bounded by a torus link.}
\label{fig:torus}
\end{figure}

\begin{rem}\label{rem:gen}
Lemma~\ref{lem:build} also holds for surfaces in a compact piece $Y \times [a,b]$ of the symplectization $Y \times \rr$ of any contact 3-manifold $Y$, where the diagram moves occur in local Darboux charts. 
\end{rem}

\begin{figure}[t]
\hspace{-.275in} \def\svgwidth{1.075\linewidth}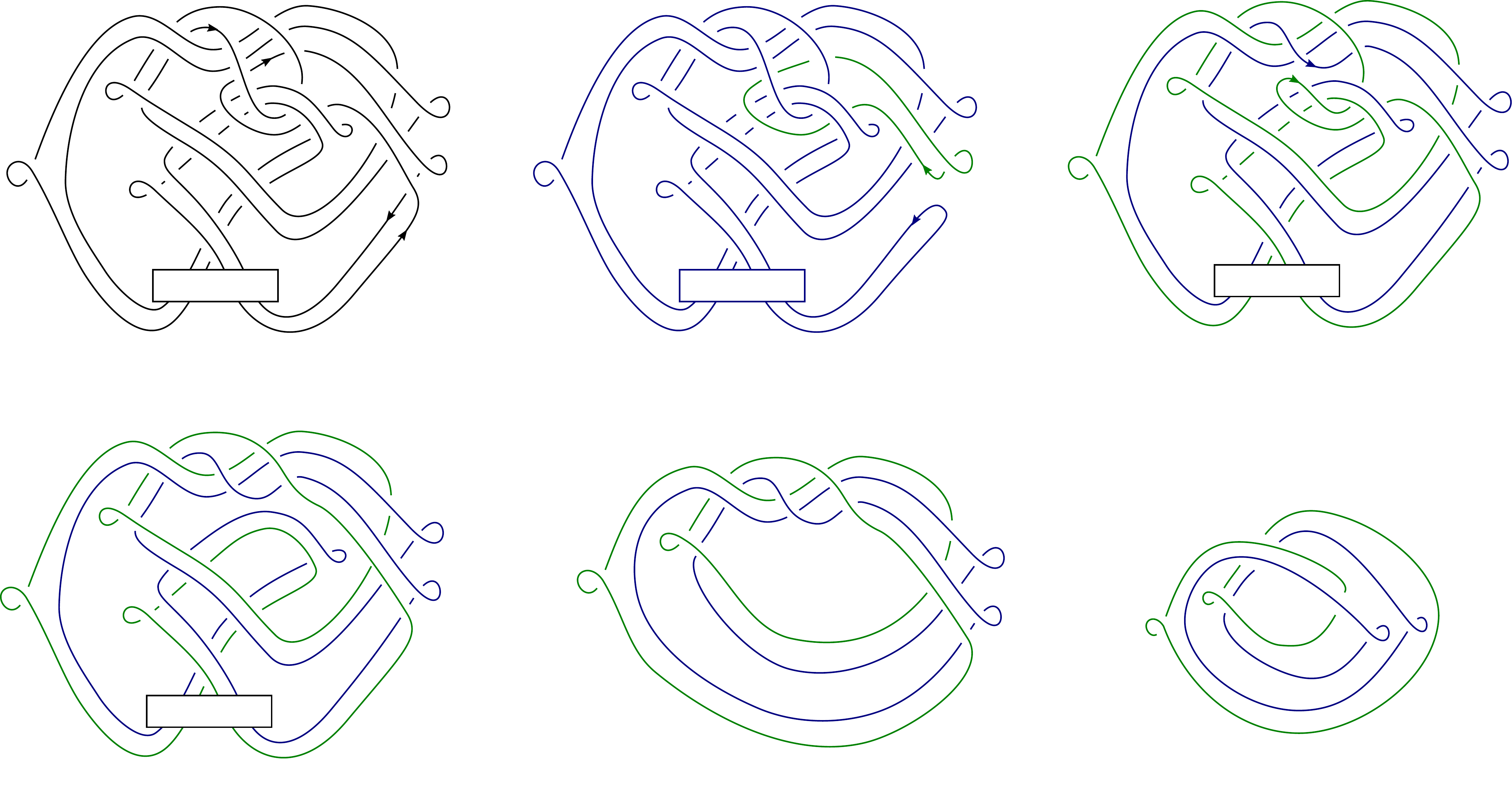
\caption{For $m\leq 0$, a transverse knot $K$ is shown in part (a). Passing from (a) to (b) or (c-1) corresponds to a transverse saddle move from Lemma~\ref{lem:build} and Figure~\ref{fig:moves}(b). It is easy to see that (b) depicts a standard two-component transverse unlink, which bounds a pair of disks via Figure~\ref{fig:moves}(a). The same is true of the link in (c-1); for this link, we include additional diagrams in parts (c-2)-(c-4) indicating the transverse isotopy that makes the standard two-component transverse unlink more apparent.}
\label{fig:new-ascending}
\end{figure}

\begin{proof}[Proof of Theorem~\ref{thm:tbd}]
For each integer $m \in \zz$, let $K=K_m$ be the knot in Figure~\ref{fig:main-link}(b), obtained by applying the construction from \S\ref{sec:simple} to the link in Figure~\ref{fig:main-link}(a). 

We begin by describing  symplectic disks in $(B^4,\omega_\st)$ bounded by the transverse representative of $K$ shown in part (a) of Figure~\ref{fig:new-ascending}. Parts (b) and (c-1)-(c-4) of Figure~\ref{fig:new-ascending}  each depict a standard two-component transverse unlink obtained from $K$ by the saddle move from Lemma~\ref{lem:build} and Figure~\ref{fig:moves}. These determine disks $D$ and $D'$, respectively, each consisting of one saddle and two minima. By Lemma~\ref{lem:build}, each of these disks is smoothly isotopic (rel boundary) to a symplectic disk in $(B^4,\omega_\st)$. We claim that
\begin{enumerate}
\item [(a)] $D$ and $D'$ are topologically isotopic (rel boundary), and

\item [(b)] for  $m=0$ and $m \ll 0$, the exteriors of $D$ and $D'$ are not diffeomorphic, so there is no diffeomorphism of $B^4$ carrying $D'$ to $D$.
\end{enumerate}

To prove (a),  it suffices to show that $B^4 \setminus D$ and $B^4 \setminus D'$ have $\pi_1 \cong \zz$,   at which point the claim follows from Theorem~\ref{thm:iso}. We find handle diagrams for the disk exteriors using the recipe from \cite{GompfStipsicz4} in Figures~\ref{fig:exterior-D}-\ref{fig:exterior-Dprime}, then compute their  fundamental groups:
\begin{align*}
\pi_1(B^4 \setminus D)  &\cong \langle x,y \mid x^{-1} y^{-1} x y^{-1} y =1 \rangle  \cong \langle x,y \mid x^{-1} y^{-1} x =1\rangle  \cong \zz 
\\
\pi_1(B^4 \setminus D') &\cong \langle x,y \mid y^{-1} y x y^{-1}  x^{-1} y y^{-1} =1 \rangle \cong \langle x,y \mid x y^{-1}  x^{-1}  =1 \rangle \cong \zz.
\end{align*}

\begin{figure}[t]\center
\def\svgwidth{.75\linewidth}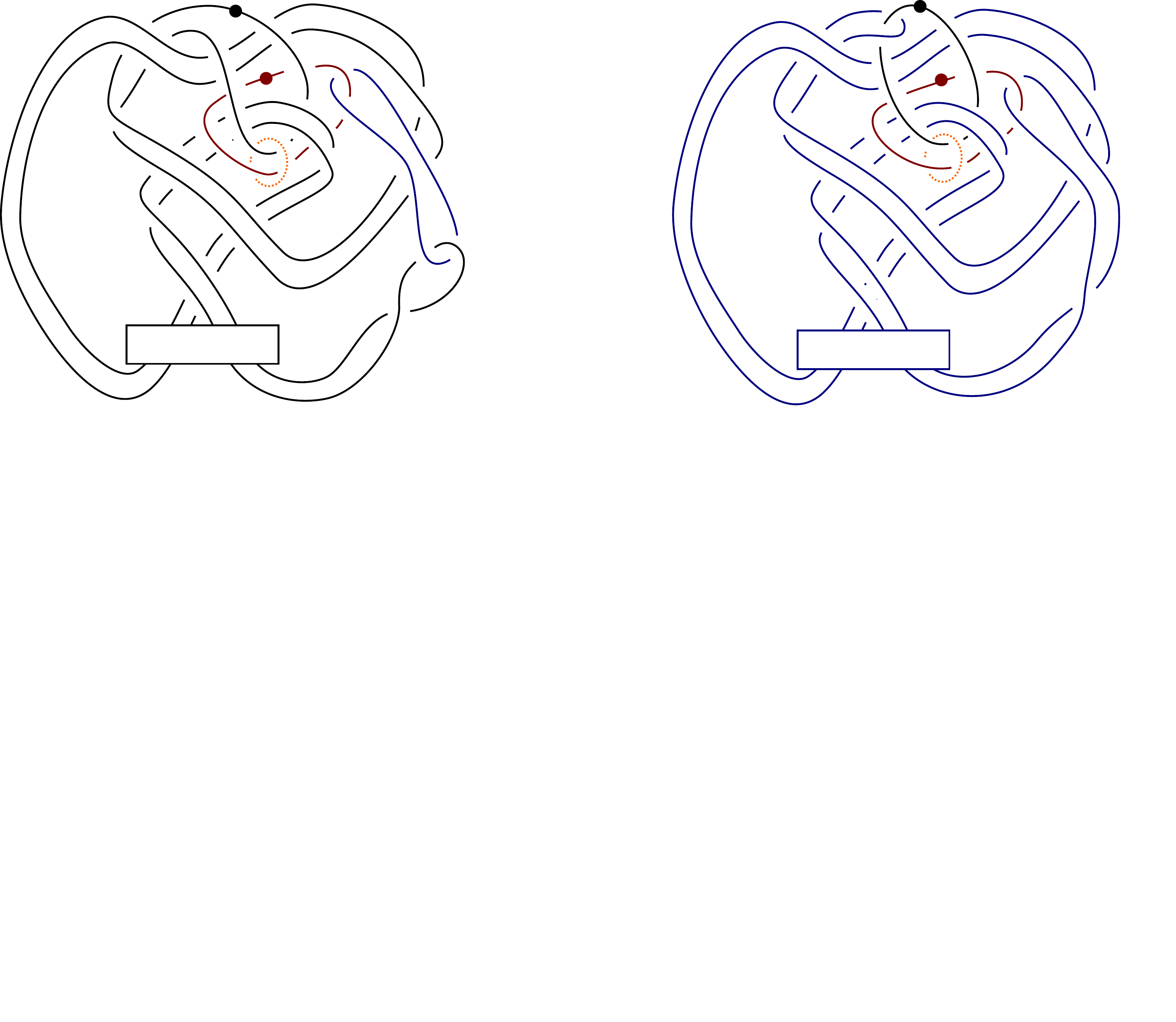
\caption{Simplifying a handle diagram for the exterior of $D$ in $B^4$. Passing from (b) to (c) corresponds to sliding one 1-handle over the other; other steps are isotopy.}
\label{fig:exterior-D}
\end{figure}

To prove (b), we assume towards a contradiction that there is a diffeomorphism of $B^4$ carrying $D'$ to $D$. To simplify our argument, we make a technical observation about the knot $K$: For $m=0$ and $m \ll 0$, every self-diffeomorphism of $S^3$ that fixes $K$ setwise  can be isotoped (rel $K$) to fix the knot exterior $S^3 \setminus \mathring{N}(K)$. The proof is analogous  to the proof of Lemma~\ref{lem:hyperbolic}, so we omit its proof here; see \cite{hayden:doc} for additional documentation regarding this calculation. Any isotopy of  $S^3 \setminus \mathring{N}(K)$ extends to an isotopy of $B^4$ supported away from $D$. Thus, for $m=0$ and $m \ll0$, we may assume that the putative diffeomorphism of $B^4$ fixes $S^3 \setminus \mathring{N}(K)$. 

 Let $\gamma$ denote the knot in $S^3 \setminus \mathring{N}(K)$ induced by the dashed curve in Figure~\ref{fig:main-link}(b). We claim  $\gamma$ bounds a disk in $B^4$ that is disjoint from $D'$. To see this, we view $\gamma$  in the disk exterior $B^4 \setminus \mathring{N}(D')$ as  in Figure~\ref{fig:exterior-Dprime}(b). This knot does not pass over any 1-handles, hence it bounds a disk in $B^4$ that is disjoint from $D'$. Since the supposed diffeomorphism  of $B^4$ fixes $\gamma$ and carries $D'$ to $D$, it carries the disk $\gamma$ bounds in $B^4 \setminus D'$ to a disk that $\gamma$ bounds in $B^4 \setminus D$.  We will prove that no such disk can exist, producing the desired contradiction. To that end, we consider $\Sigma(B^4,D)$, the branched double cover of $B^4$ along $D$.

\begin{figure}\center
\def\svgwidth{.95\linewidth}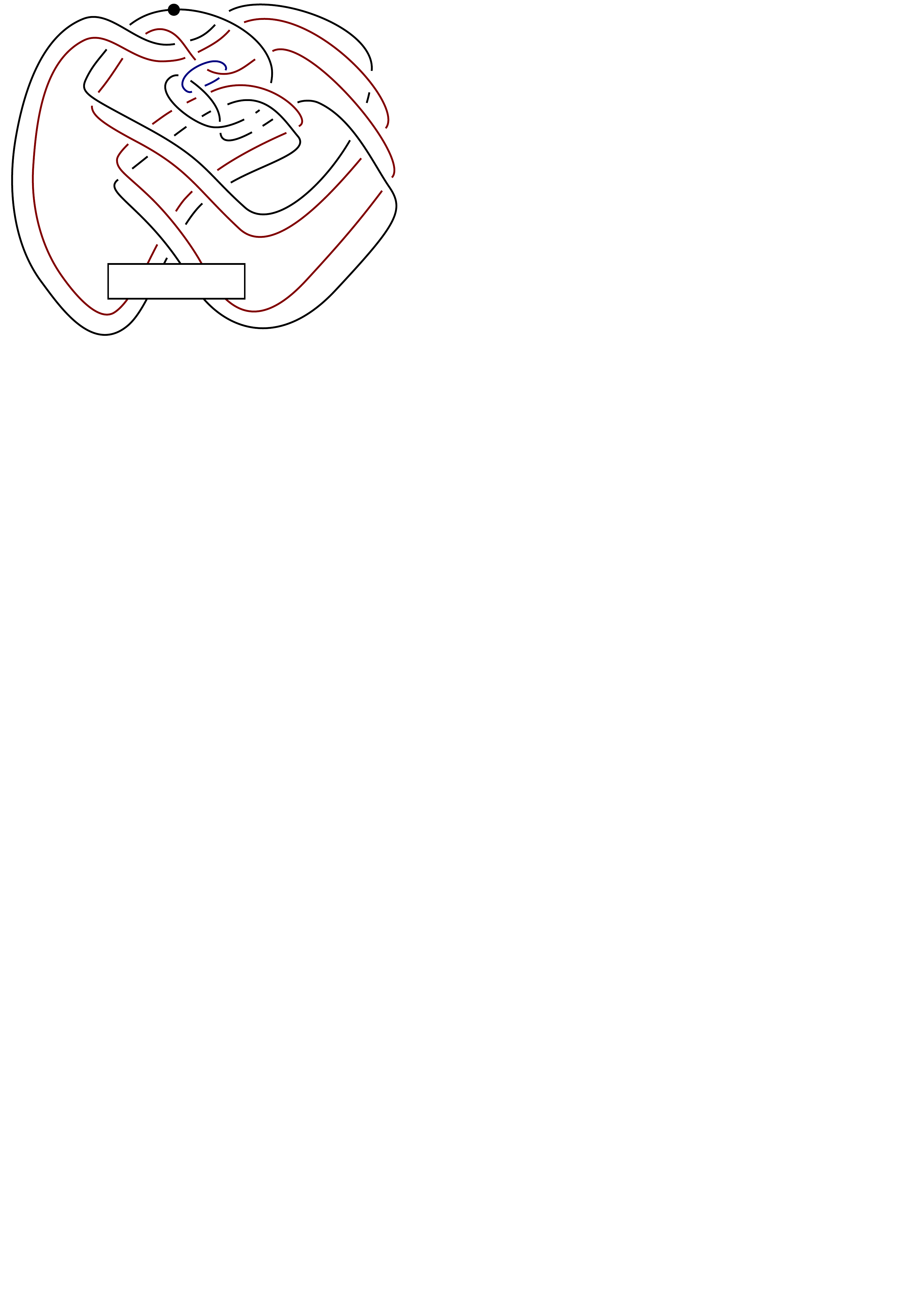
\caption{Simplifying a handle diagram for the exterior of $D'$ in $B^4$.}
\label{fig:exterior-Dprime}
\end{figure}

Following the recipe from \cite{GompfStipsicz4}, we find a handle diagram for $\Sigma(B^4,D)$. This is carried out in parts (a) through (h) of Figure~\ref{fig:dbc}. 
 Throughout parts of this figure, we track two curves: the lift $\tilde K$ of $K$ and a curve that becomes a preferred lift $\tilde \gamma$ of $\gamma$.  In Figure~\ref{fig:dbc}(i), we show that $\Sigma(B^4,D)$ admits a Stein structure by recasting  the diagram from part (h) in the ``standard form'' of \cite{gompf:stein}.

\begin{figure}\center
\def\svgwidth{.5\linewidth}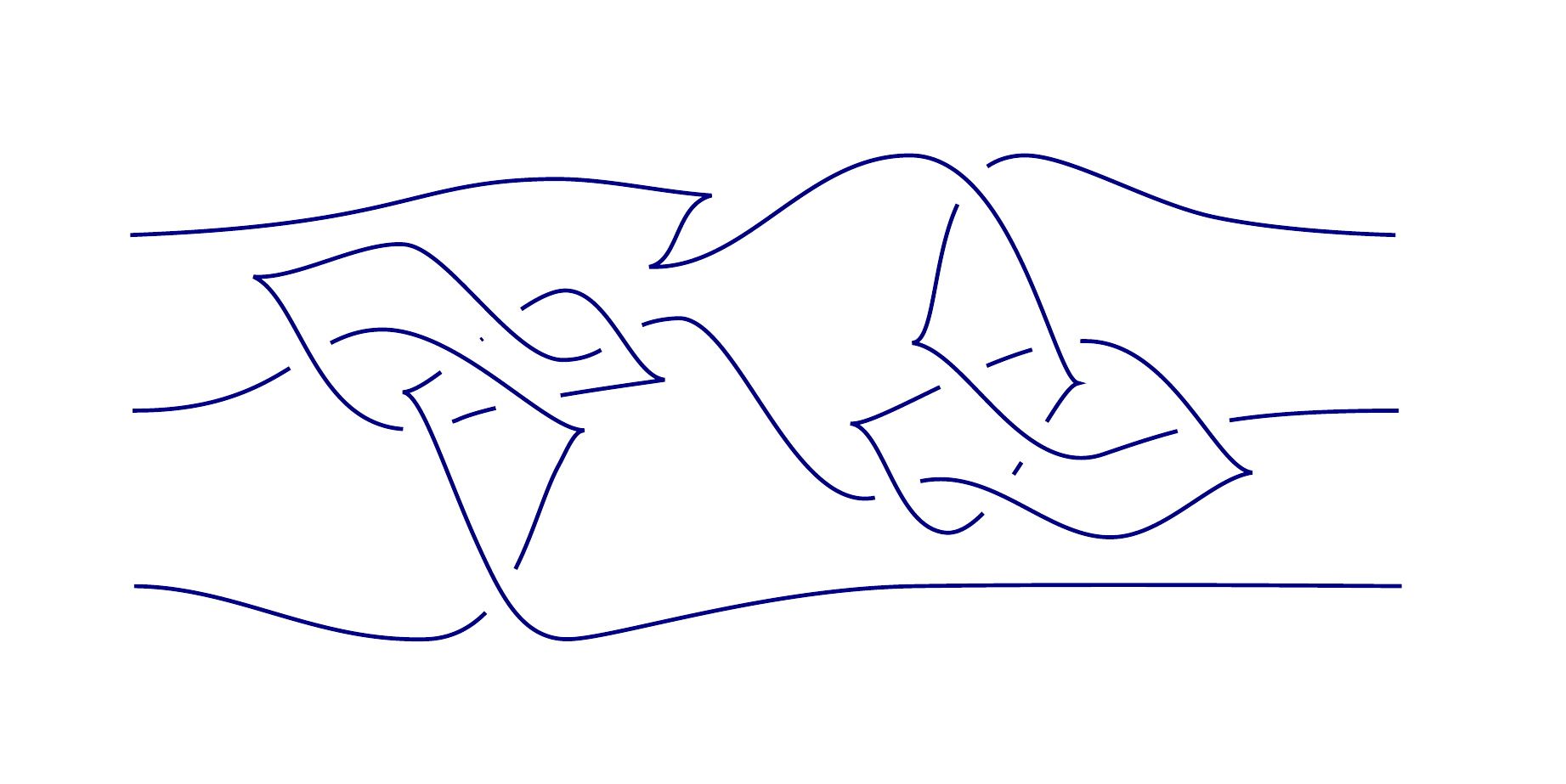
\caption{A Stein handle diagram for $\Sigma(B^4,D)$.}
\label{fig:lifts}
\end{figure}

\begin{figure}\center
\def\svgwidth{.875\linewidth}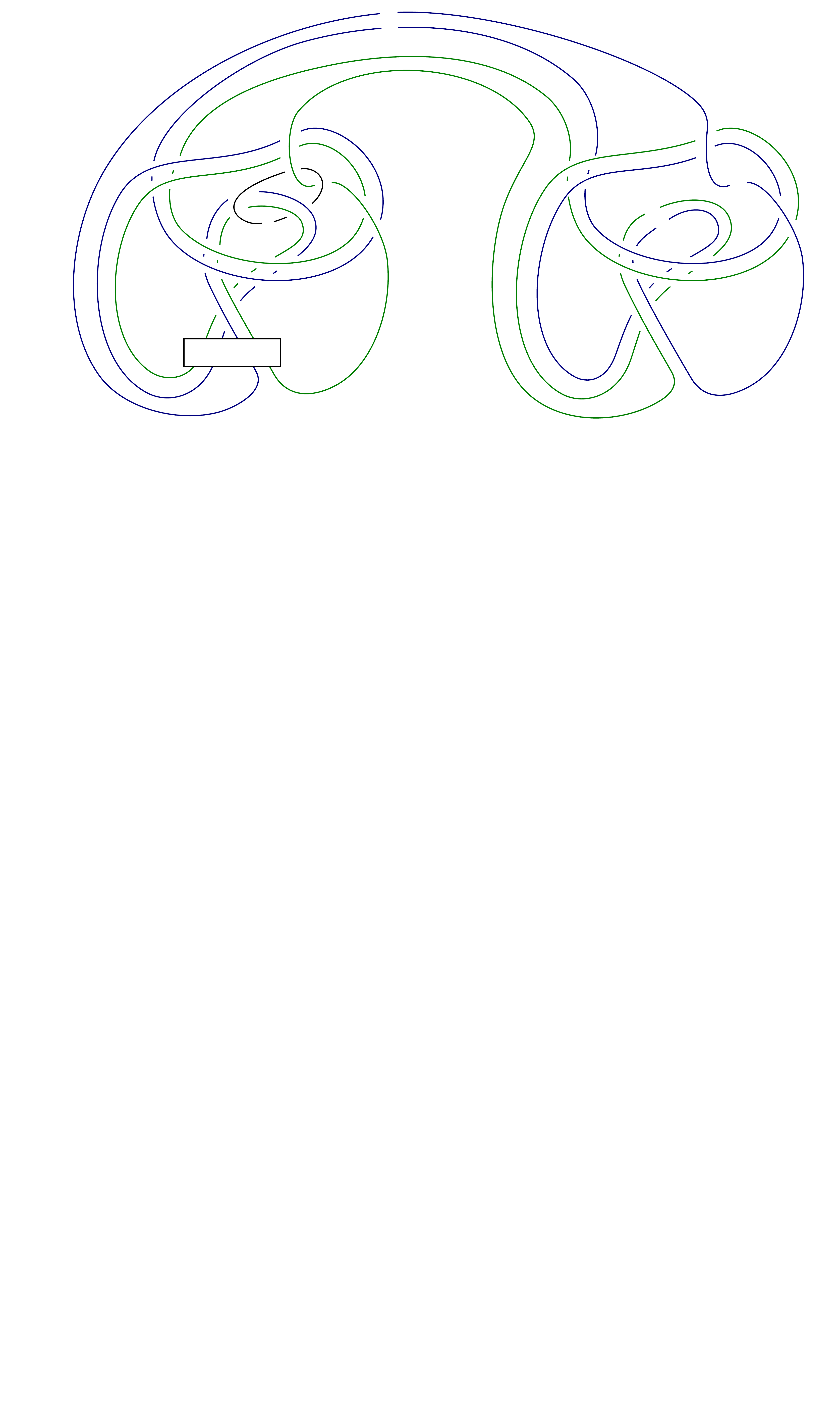
\caption{Simplifying the diagram of $\Sigma(B^4,D)$ using handle slides and cancellation.}
\label{fig:dbc}
\end{figure}

\begin{figure}\center
\def\svgwidth{\linewidth}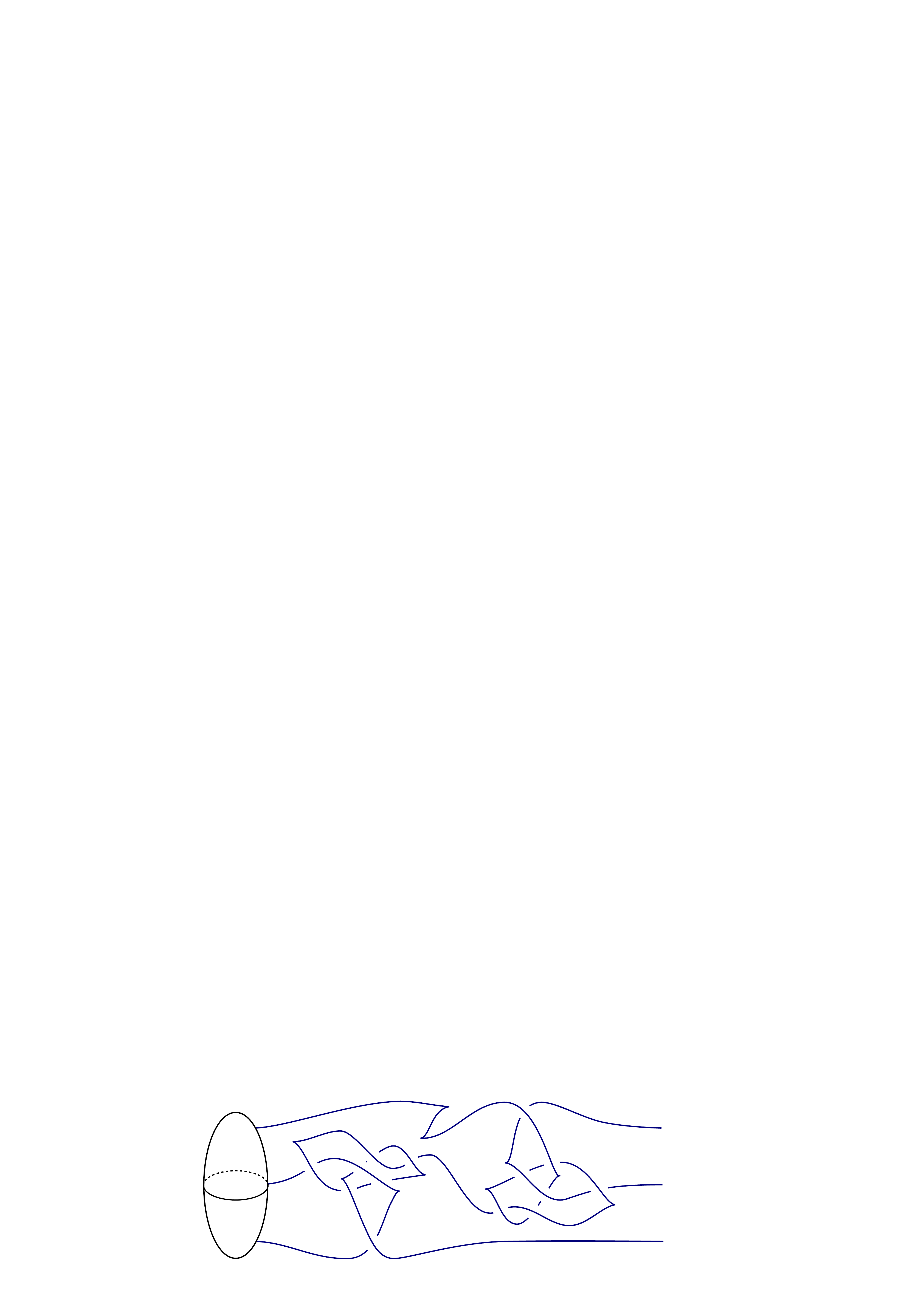
   \renewcommand\figurename{Figure}
   \renewcommand{\thefigure}{\arabic{figure} - Continued}
   \addtocounter{figure}{-1}
\caption{Further simplifying the diagram of $\Sigma(B^4,D)$ by isotopy.}
\label{fig:dbc-2}
\end{figure}

From here, our argument follows a well-tread path that dates back at least to \cite{akbulut-matveyev}. Observe that the lifted knot $\tilde \gamma$ has a Legendrian representative in $\partial \Sigma(B^4,D)$ whose Thurston-Bennequin number is zero. It follows that the 4-manifold obtained by attaching a $(-1)$-framed 2-handle to $\Sigma(B^4,D)$ along $\tilde \gamma$ also admits a Stein structure. Since the curve $\gamma$  bounds a disk in $B^4$ that is disjoint from $D$, the lift $\tilde \gamma$ bounds a disk in the branched cover $\Sigma(B^4,D)$. After attaching the $(-1)$-framed 2-handle to $\tilde \gamma$, the disk bounded by $\tilde \gamma$ gives rise to a smoothly embedded 2-sphere with self-intersection number $-1$ in the resulting Stein domain. However, this contradicts \cite[Proposition~2.2]{lisca-matic}, which states that any homologically essential 2-sphere in a Stein domain has self-intersection at most $-2$. We conclude that $\gamma$ cannot bound a smoothly embedded disk in $B^4$ that is disjoint from $D$, hence there can be no diffeomorphism of $B^4$ carrying $D'$ to $D$.

This completes the proof for surfaces of genus $g=0$. For $g \geq 1$, let $\surf_0 \subset (B^4,\omega_\st)$ be the genus $g$ symplectic surface bounded by the torus knot $T_{2,2g+1}$ constructed as in Figure~\ref{fig:torus} using Lemma~\ref{lem:build}. Then let $\surf$ and $\surf'$ be obtained by taking the boundary connected sum of $\surf_0$ with $D$ and $D'$, respectively. This can be done symplectically using the 1-handle move in Figure~\ref{fig:moves}(b), so we may assume that $\surf$ and $\surf'$ are symplectic surfaces in $(B^4,\omega_\st)$ bounded by the same transverse link $K \# T_{2,2g+1}$ in $(S^3,\xi_\st)$. Since $D$ and $D'$ are topologically isotopic rel boundary, the same is true of $\surf$ and $\surf'$. 

It is straightforward to show that  $\Sigma(B^4,\surf)=\Sigma(B^4,D \natural \surf_0)$ is the boundary  sum $\Sigma(B^4,D) \natural \Sigma(B^4,\surf_0)$ and that $\Sigma(B^4,\surf_0)$ is a Stein filling of $\Sigma(S^3,T_{2,2g+1})$, which is the lens space $L(2g+1,1)$.  From above, we also recall that   $\Sigma(B^4,D)$ admits a Stein structure with the schematic handle diagram shown in Figure~\ref{fig:lifts}. Thus we may use a Stein 1-handle to form the boundary  sum $\Sigma(B^4,D) \natural \Sigma(B^4,\surf_0)=\Sigma(B^4,\surf)$.

Again let $\gamma$ be the knot in $S^3 \setminus \mathring{N}(K) \subset \partial B^4$ corresponding to the dashed curve in Figure~\ref{fig:main-link}(b). Just as before, $\gamma$ is seen to bound a smooth disk in the exterior of $\surf'$. As above, we can show that any diffeomorphism of $B^4$ carrying $\surf'$ to $\surf$ can be assumed to fix $\gamma$; this argument is only mildly more complicated than the previous setting, and is described in Remark~\ref{rem:jsj}.  
Hence any diffeomorphism of $B^4$ carrying $\surf'$ to $\surf$ carries the disk bounded by $\gamma$ in $B^4 \setminus \surf'$ to a disk bounded by $\gamma$ in $B^4 \setminus \surf$. 

To obstruct the existence of such a disk, we again consider a lift $\tilde \gamma$ of $\gamma$ to the double branched cover $\Sigma(B^4,\surf)$.  As before, the lift $\tilde \gamma$ has a Legendrian representative in the boundary of the Stein domain $\Sigma(B^4,\surf)$ with Thurston-Bennequin number zero. If $\gamma$ bounds a disk in $B^4 \setminus \surf$, then $\tilde \gamma$ bounds a disk in $\Sigma(B^4,\surf)$. Attaching a $(-1)$-framed 2-handle to $\Sigma(B^4,\surf)$ along $\tilde \gamma$ then yields a Stein domain containing a smoothly embedded 2-sphere with self-intersection $-1$, contradicting  \cite[Proposition~2.2]{lisca-matic}. We conclude that there is no diffeomorphism of $B^4$ taking $\surf'$ to $\surf$. \end{proof}

\begin{rem}\label{rem:jsj}
To study the diffeomorphisms of $S^3 \! \setminus \! (K \# T_{2,2g+1})$, we view $K \# T_{2,2g+1}$ as a satellite knot lying inside $N(K)$. The  incompressible torus $T=\partial N(K)$ separates $S^3 \setminus (K \# T_{2,2g+1})$ into two pieces, one of which is $S^3 \setminus K$. The other piece, denoted $M$, is diffeomorphic to the complement of the link in $S^3$ formed from the torus knot $T_{2,2g+1}$ and its meridian. Thus $M$ further splits into two Seifert fibered pieces: the complements  of $T_{2,2g+1}$ and the Hopf link. It follows that $S^3 \setminus K$ is the only hyperbolic piece in the JSJ decomposition \cite{jaco-shalen,johannson} of $S^3 \setminus \partial F$.  Every self-diffeomorphism of $\partial X$ can be isotoped to preserve $T$ setwise and thus preserve these distinct JSJ pieces on each side of $T$. For $m=0$ and $m \ll 0$, we already know that every self-diffeomorphism of $S^3 \setminus K$ is isotopic to the identity, hence we may assume that any self-diffeomorphism of $B^4$ carrying $\surf$ to $\surf'$  restricts to the identity on $S^3 \setminus \mathring{N}(K)$. 
\end{rem}

\textbf{Exotic closed symplectic surfaces.}  To produce \emph{closed} symplectic surfaces, we use a construction of symplectic 2-handles due to Gay \cite{gay:2-handles}, as applied by Bowden \cite{bowden:thesis}. Recall that symplectic 4-manifold $(X,\omega)$ is said to be \emph{convex} if there exists a  vector field $v$ defined on a neighborhood of $\partial X$ that is Liouville for $\omega$ (i.e.,~$\mathcal{L}_v \omega=\omega$) and points outward along $\partial X$. The pair $(X,\omega)$ is \emph{weakly convex} if $\partial X$ admits a (positive) contact structure $\xi$ such that $\omega|_{\xi}$ is nondegenerate. 

The following result combines Theorem 1.1 and Proposition 1.8 of \cite{gay:2-handles}.

\begin{thm}[Gay]
\label{thm:2-h}
Let $K$ be a transverse knot in the boundary of a convex symplectic 4-manifold $(X,\omega)$, and let $X'$ be obtained from $X$ by attaching a 2-handle along $K$. If the framing of the 2-handle is sufficiently negative, then $X'$ admits a symplectic form $\omega'$ such that $(X',\omega')$ is weakly convex. 

 In particular, if $K$ is the (positive) transverse push-off of a Legendrian knot, then the conclusion above holds for any framing that is strictly negative relative to the Thurston-Bennequin framing of this Legendrian representative. 
\end{thm}

The core disks of these symplectic 2-handles are themselves symplectic and can be used to cap off symplectic surfaces $\surfb$ in $(X,\omega)$ bounded by $K$. To ensure that the resulting symplectic surfaces are smooth, we require that $\surfb$ be \emph{cylindrical near the boundary}. This means that $\surfb$ must be tangent to a Liouville vector field $v$ for $\omega$ in a neighborhood of $\partial X$. It is straightforward to arrange that any surface constructed using Lemma~\ref{lem:build} is cylindrical near its boundary.

\begin{prop}[{cf \cite[\S7.2]{bowden:thesis}}]\label{prop:cap}
Let $K$, $(X,\omega)$, and $(X',\omega')$  be as above. Any symplectic surface $\surfb \subset X$ with $\partial \surfb=K$ that is cylindrical near its boundary can be capped off with the core disk of the symplectic 2-handle to yield a closed symplectic surface $\surfb'$ in $X'$.
\end{prop}

\begin{rem} The results above (Theorem~\ref{thm:2-h} and Proposition~\ref{prop:cap}) can be extended using ``higher genus'' handles of the form $F \times D^2$ where $F$ is a compact, orientable surface of any genus with one boundary component, attached to $X$ using an embedding $\partial F \times D^2 \hookrightarrow \partial X$; see  \cite[\S3]{hp:embedding}.
\end{rem}

Ideally, we aim to work with convex symplectic 4-manifolds, whereas the construction in Theorem~\ref{thm:2-h} only ensures \emph{weakly} convex boundary. Fortunately, when the boundary is a rational homology 3-sphere, the stronger convexity condition can always be achieved:

\begin{lem}[{\cite[Lemma~1.1]{oo:simple}}]\label{lem:QHS3}
If a symplectic 4-manifold $(X,\omega)$ is weakly convex and $\partial X$ is a rational homology 3-sphere, then $\omega$ may be deformed in an arbitrarily small neighborhood of $\partial X$ to a symplectic form $\omega'$ such that $(X,\omega')$ is convex.
\end{lem}

Before constructing our examples of exotically knotted, closed symplectic surfaces, we also require some topological preliminaries about branched covers of knot traces. Recall from \S\ref{sec:simple} that the \emph{$n$-trace} of a knot  $K$ in $S^3$ is the 4-manifold $X=X_n(K)$ obtained by attaching an $n$-framed 2-handle to $B^4$ along $K$. Given a pair of slice surfaces $F,F' \subset B^4$ bounded by the same knot $K$, we obtain surfaces $S,S' \subset X$ by capping off each of $F,F' \subset B^4$ with the core of the 2-handle in $X$. Our goal will be to distinguish $S$ and $S'$ by studying surfaces in their complements.  In our case, this will be simplified by passing to the double branched covers of $X$ along $S$ and $S'$. The next lemma describes the handle structure of such branched covers.

\begin{lem}\label{lem:cover}
Let $F$ be a properly embedded surface in $B^4$ bounded by $K$ in $S^3$ and, for any $n \in \zz$, let $S$ be the surface in the knot trace $X=X_n(K)$ obtained by capping off $F$ with the core of the 2-handle. Then 
\begin{enumerate}
\item [(a)] $H_1(X \setminus S)$ is generated by the meridian to $S$ and is isomorphic to $ \zz/n$, and 
\item [(b)] for any positive integer $k$ dividing $n$, the $k$-fold cyclic branched cover of $X$ along $S$, denoted $\Sigma_k(X,S)$, is obtained from the $k$-fold cyclic branched cover of $B^4$ along $F$, denoted $\Sigma_k(B^4,F)$, by attaching a 2-handle to the lift $\tilde K$ in $\Sigma_k(S^3,K)=\partial \Sigma_k(B^4,F)$ with framing $n/k$.
\end{enumerate}
\end{lem}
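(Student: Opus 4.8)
The plan is to analyze the handle structure of $X = X_n(K)$ relative to $S$ directly, and then identify the branched covers by lifting handles. First I would set up a handle decomposition of $X$ adapted to $S$: start from a handle decomposition of the disk exterior $B^4 \setminus \mathring{N}(D)$, observe that $X$ is obtained from $B^4$ by attaching the $n$-framed $2$-handle along $K$, and that a tubular neighborhood $N(S)$ is the union of $N(D)$ with (a neighborhood of) the core of that $2$-handle. Thus $X \setminus \mathring{N}(S)$ is obtained from $B^4 \setminus \mathring{N}(D)$ by attaching a single $2$-handle along a meridian $\mu$ of $K$ (lying in $\partial(B^4 \setminus \mathring{N}(D))$) with framing determined by $n$; concretely, attaching the $n$-framed $2$-handle to $B^4$ along $K$ is the same, from the complement's viewpoint, as a $2$-handle along the meridian, and the induced relation in $\pi_1$ sets the meridian to have order $n$. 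Since $\pi_1(B^4 \setminus \mathring{N}(D))$ is already normally generated by the meridian to $D$ (which becomes the meridian to $S$), statement (a) follows, and abelianizing gives (b): $H_1(X \setminus S) \cong \zz/n\zz$.

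For (c), I would construct $\Sigma_k(X,S)$ by building the $k$-fold cyclic cover of $X \setminus \mathring{N}(S)$ determined by the surjection $\pi_1(X \setminus S) \to \zz/n\zz \to \zz/k\zz$ (using $k \mid n$), then filling in the branch locus. The cover of $X \setminus \mathring{N}(S)$ restricts over $B^4 \setminus \mathring{N}(D)$ to the corresponding $k$-fold cyclic cover, namely $\Sigma_k(B^4,D) \setminus \mathring{N}(\tilde D)$; the extra $2$-handle of $X \setminus \mathring{N}(S)$, attached along the meridian $\mu$, lifts — since $\mu$ maps to a generator of $\zz/k\zz$ — to a single $2$-handle upstairs attached along the lift $\tilde\mu$, with framing scaled appropriately. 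Then I would re-fill the branch locus: downstairs, capping the meridional $2$-handle back in turns $B^4 \setminus \mathring{N}(D)$ into $X$; upstairs, capping $\tilde N(\tilde S)$ back in turns $\Sigma_k(B^4,D) \setminus \mathring{N}(\tilde D)$ into $\Sigma_k(B^4,D)$ with an additional $2$-handle along $\tilde K$. The framing count is the crux: the meridian $2$-handle downstairs carries the data of the $n$-framing, it wraps $k$ times around the branch sphere, and standard branched-cover framing arithmetic (the self-intersection of the lifted core sphere is $n/k$, as $k$ copies of the downstairs framing data are redistributed onto $k$ lifts whose total is $n$) yields framing $n/k$ on $\tilde K$. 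I would verify this either by the Euler number computation $[\tilde S]^2 = n/k$ forced by $k[\tilde S] \mapsto [S]$ and $[S]^2 = n$, or by drawing the branched cover of the explicit two-handle Kirby diagram.

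The main obstacle I expect is pinning down the framing $n/k$ rigorously rather than heuristically — in particular making precise the claim that the lift of the meridional $2$-handle is again a single $2$-handle (as opposed to several) and that its framing is exactly $n/k$ and not, say, $n$ or $n/k^2$. The clean way to handle this is homological: once (b) is established, the branched cover $\Sigma_k(X,S)$ is built from $\Sigma_k(B^4,D)$ by attaching a $2$-handle along $\tilde K$ with some framing $f$, and $f = [\tilde S]^2$ where $\tilde S$ is the capped-off sphere; then the transfer relation $k \cdot [\tilde S] = p^*[S]$ (with $p$ the covering map) together with $[S] \cdot [S] = n$ and the fact that $p$ multiplies intersection numbers by $k$ gives $k^2 [\tilde S]^2 = k \cdot n$, hence $[\tilde S]^2 = n/k$. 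Everything else — the identification of the unbranched covers over the $4$-ball, the lift of the meridian handle, the $\pi_1$ and $H_1$ computations — is routine covering-space and handle bookkeeping that I would execute but not belabor here.
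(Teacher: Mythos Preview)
There is a genuine error in your setup for (a). You assert that $X \setminus \mathring{N}(S)$ is obtained from $B^4 \setminus \mathring{N}(D)$ by attaching a single 2-handle along the meridian $\mu$, with the framing encoding the relation $\mu^n = 1$. But attaching a 2-handle along a curve $\gamma$ kills $\gamma$ in $\pi_1$ \emph{regardless of framing}, so a 2-handle along $\mu$ would force $\mu = 1$, not $\mu^n = 1$. The claim already fails at the level of boundaries: $B^4 \setminus \mathring{N}(D)$ has connected boundary $S^3_0(K)$, and one 2-handle keeps the boundary connected, whereas $X \setminus \mathring{N}(S)$ has \emph{two} boundary components, $\partial X = S^3_n(K)$ and $\partial N(S) \cong L(n,1)$. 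What is actually attached is the 2-handle minus a neighborhood of its core, namely $D^2 \times A$ with $A$ an annulus, glued along $\partial D^2 \times A$; homotopically this adds a 2-cell along the $n$-framed longitude of $K$, which equals $\mu^n$ in $\pi_1(B^4 \setminus D)$ because the Seifert longitude bounds a pushoff of $D$. That is the real source of the relation. The paper packages this as a Seifert--van Kampen argument, using that the complement of $S$ in its tubular neighborhood deformation retracts onto $L(n,1)$ with $\pi_1$ generated by $\mu$.

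Your approach to (c) inherits the problem: you try to lift the nonexistent 2-handle along $\mu$, and the covering-space reasoning is inconsistent --- a 2-handle whose attaching circle generates the deck group $\zz/k$ cannot lift to a single 2-handle, since the 2-handle is simply connected and hence has only the disconnected $k$-fold cover. The paper instead observes directly that the branched cover of the 2-handle $D^2 \times D^2$ over its core $D^2 \times 0$ is again $D^2 \times D^2$ via $(z,w)\mapsto (z,w^k)$, so $\Sigma_k(X,S)$ is $\Sigma_k(B^4,D)$ with a single 2-handle along $\tilde K$, and reads off the framing from how meridians (degree $k$) and longitudes (degree $1$) lift. Your homological framing computation $[\tilde S]^2 = n/k$ is a clean alternative for that last step and would work once the handle structure of $\Sigma_k(X,S)$ is established correctly.
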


\begin{proof}
For part (a), let $\mu$ and $\lambda$ denote a meridian and Seifert longitude to $K$, respectively. Since $H_1(X)=0$, it is easy to see that every element of $H_1(X \setminus S)$ is homologous to a multiple of $[\mu]$, hence $[\mu]$ generates $H_1(X\setminus S)$. On the other hand, the 2-handle framing curve $\lambda+n \cdot \mu$ bounds a pushoff of the 2-handle's core disk, so $n[\mu]=[\lambda+n \cdot \mu]=0$ in $H_1(X \setminus S)$ because $\lambda$ is nullhomologous. It follows that $H_1(X \setminus S) \cong \zz/n$.

To prove part (b), we let $k$ be any positive integer dividing $n$. The meridian of $S$ generates $H_1(X \setminus S) \cong \zz/n$, so we may construct a $k$-fold cyclic branched cover of $X$ along $S$.  Since $X$ is formed from the union of $B^4$ and $D^2 \times D^2$, the branched cover $\Sigma_k(X,S)$ is  a union of the branched covers $\Sigma_k(B^4,F)$ and $\Sigma_k(D^2 \times D^2, D^2 \times 0)$. The latter is simply $D^2 \times D^2$, with the branched covering map given by $(z,w)\mapsto (z,w^k)$ in complex coordinates on $D^2 \times D^2 \subset \cc \times \cc$. Similarly, the gluing region $N(K) \cong S^1 \times D^2$ lifts to a neighborhood $N(\tilde K) \cong S^1 \times D^2$ of the lifted knot $\tilde K$ in $\Sigma_k(S^3,K)=\partial \Sigma_k(B^4,F)$. It follows that $\Sigma_k(X,S)$ is obtained from $\Sigma_k(B^4,F)$ by attaching a 2-handle along $\tilde K$ in $\Sigma_k(S^3,K)$. To determine the framing, we note that the meridian of $\tilde K$ covers the meridian of $K$ with degree $k$  (whereas $\tilde K$ has a preferred Seifert longitude that covers the Seifert longitude of $K$ with degree one). Thus the $n$-framing curve of $K$ in $S^3$ lifts to an $n/k$-framing curve of $\tilde K$ in $ \Sigma_k(S^3,K)$. Therefore the disks $D^2 \times \{pt\}$ in the lifted 2-handle meet $N(\tilde K)$ along $n/k$-framing curves of $\tilde K$, i.e.,~the lifted 2-handle is attached to $\Sigma_k(B^4,F)$ along $\tilde K$ with framing $n/k$.
\end{proof}

We now produce  examples of closed symplectic surfaces that are exotically knotted.

\begin{proof}[Proof of Theorem~\ref{thm:symp}] 
We first prove the theorem in the case $g=0$, i.e.,~pairs of exotic symplectic 2-spheres. For $m\leq 0$, let $K$ denote the knot bounding the  symplectic disks $D,D' \subset B^4$ constructed in the proof of Theorem~\ref{thm:tbd}. Our ambient 4-manifold $X$ will be the knot trace $X_n(K)$ for a choice of integer $n \leq 0$, and the 2-spheres $S,S'\subset X$ are obtained by capping off the disks $D,D' \subset B^4$ with the core of the 2-handle in $X$. 

We claim that
\begin{enumerate}
\item [(a)] $S$ and $S'$ are topologically isotopic,
\item [(b)] for any integer $n \leq -3$,  the 4-manifold $X=X_n(K)$ admits a convex symplectic structure with respect to which $S$ and $S'$ are symplectic, and
\item [(c)] for \emph{even}  $n \ll 0$ and either $m=0$ or $m \ll 0$, the exteriors of $S$ and $S'$ are not diffeomorphic, so there is no diffeomorphism of $X$ carrying $S'$ to $S$.
\end{enumerate}

The claim in (a) follows immediately because $S$ and $S'$ are obtained by capping off $D$ and $D'$ with the core of the 2-handle attached along their common boundary $K$.  

To prove (b), we note that $K$ can be realized as the (positive) transverse pushoff of a Legendrian knot $L$ with Thurston-Bennequin number $tb(L)=-2$; see Figure~\ref{fig:pushoff}. After negative Legendrian stabilization of $L$ (inducing transverse isotopy of $K$), we can ensure $tb(L)-1=n$ for any $n \leq -3$. Apply Theorem~\ref{thm:2-h} to produce a weakly convex symplectic structure on $X$ and apply Proposition~\ref{prop:cap} to realize $S$ and $S'$ symplectically. The boundary $\partial X = S^3_n(K)$ is a rational homology 3-sphere (because $n \neq 0$), so, by Lemma~\ref{lem:QHS3}, the symplectic structure can be made convex using a deformation supported near $\partial X$.

\begin{figure}\center
\def\svgwidth{.8\linewidth}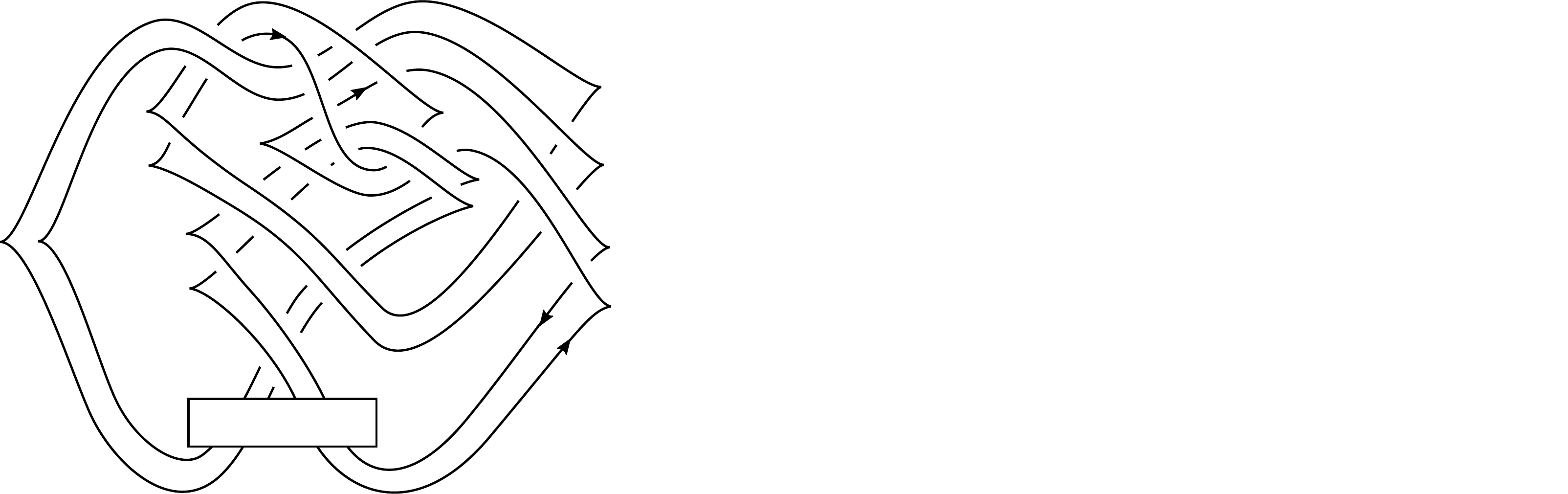
\caption{The transverse knot $K$ (right) is the pushoff of a Legendrian knot $L$ (left) with Thurston-Bennequin number $tb(L)=-2$ and rotation number $r(L)=-1$.}
\label{fig:pushoff}
\end{figure}

To prove (c), we again make a simplifying technical observation about the boundary of our 4-manifold:  For $n \ll 0$ and either $m=0$ or $m \ll 0$, every self-diffeomorphism of $\partial X$ is isotopic to the identity. This follows the same strategy as in the proofs of Theorem~\ref{thm:tbd} and Lemma~\ref{lem:hyperbolic}. In particular, as in the proof of Theorem~\ref{thm:tbd}, the calculations in \cite{hayden:doc} show that $S^3 \setminus K$ admits a hyperbolic structure with trivial isometry group. As in the proof of Lemma~\ref{lem:hyperbolic}, we may then apply \cite[Lemma~2.2]{dhl} and \cite{gabai:smale} to conclude that every self-diffeomorphism of $\partial X = S^3_n(K)$ is isotopic to the identity for $n \ll 0$. 

We also recall that, when viewed in $S^3 \setminus K$, the knot $\gamma$ from Figure~\ref{fig:main-link}(b) bounds a disk in $B^4$ that is disjoint from $D'$. It follows that, when viewed in $\partial X$, $\gamma$ also bounds a disk in $X$ that is disjoint from $S'$. 

For the sake of contradiction, assume that there exists a diffeomorphism of $X$ carrying $S'$ to $S$. For $m=0$ or $m \ll 0$, the diffeomorphism can be assumed to fix $\partial X$.  Since this diffeomorphism carries $S$ to $S'$ and fixes $\gamma \subset \partial X$, it carries the disk that $\gamma$ bounds in $X \setminus S'$ to a disk that $\gamma$ bounds in $X \setminus S$. We again show that no such disk can exist by considering $\Sigma(X,S)$, the branched double cover of $X$ along $S$. Per Lemma~\ref{lem:cover}, $\Sigma(X,S)$ is obtained from $\Sigma(B^4,D)$, the branched double cover of $B^4$ along $D$, by attaching a 2-handle to the lift $\tilde K$ of $K=\partial D$ with framing $n/2$.

A handle diagram for $\Sigma(X,S)$  is obtained by attaching an $n/2$-framed 2-handle to the lift $\tilde K$ in Figure~\ref{fig:lifts}. Though it is not drawn in detail, the knot $\tilde K$ can be represented by \emph{some} Legendrian knot in standard form in the diagram;  the attaching curve for the 0-framed 2-handle and the dashed curve representing  $\tilde \gamma$ are drawn as shown, though $\tilde K$ may have crossings with these curves. Thus, for all even integers $n$ such that $n/2$ is less than the Thurston-Bennequin number of the chosen Legendrian representative of $\tilde K$, we see that  $\Sigma(X,S)$ admits a Stein structure. (For $n/2< tb-1$, the Legendrian representative of $\tilde K$ will need to be stabilized before the 2-handle is attached.)

The rest of the argument from Theorem~\ref{thm:tbd} for $g=0$ carries over in the obvious way, showing that $\tilde \gamma$ cannot bound a disk in $\Sigma(X,S)$. It follows that $\gamma$ cannot bound a disk in $X \setminus S$, so there is no diffeomorphism of $X$ carrying $S'$ to $S$.

 For $g \geq 1$, our modifications to the above argument are parallel to those used to prove  Theorem~\ref{thm:tbd} for $g \geq 1$, so we only outline the changes. Let $F,F' \subset (B^4,\omega_\st)$ denote the higher genus symplectic surfaces bounded by $K \# T_{2,2g+1}$ that were constructed in the proof of Theorem~\ref{thm:tbd}.  Attaching an $n$-framed 2-handle along $K\# T_{2,2g+1}$ yields the desired ambient 4-manifold $X$, and we obtain closed, topologically isotopic surfaces $S,S' \subset X$ of genus $g$ by capping off $F,F' \subset B^4$ with the core of the 2-handle in $X$. As above, for $n \ll 0$, we may assume that $X$ admits a convex symplectic structure for which $S$ and $S'$ are symplectic. And, as before, we may simplify our analysis by showing that every self-diffeomorphism of $\partial X$ fixes $\gamma$ up to isotopy; see Remark~\ref{rem:jsj-2}.

Note that $X$ has a handle diagram obtained from Figure~\ref{fig:main-link}(b) by replacing $K$ with $K \# T_{2,2g+1}$ and attaching a $n/2$-framed 2-handle. Just as before, the knot  $\gamma$ is seen to bound a smooth disk in the exterior of $S'$. If there exists a diffeomorphism of $X$ carrying $S'$ to $S$, then it can be assumed to fix $\gamma \subset S^3 \setminus \mathring{N}(K)$, so it carries the disk bounded by $\gamma$ in $X \setminus S'$ to a disk bounded by $\gamma$ in $X \setminus S$. To obstruct the existence of such a disk, we again consider a lift $\tilde \gamma$ of $\gamma$ to the double branched cover $\Sigma(X,S)$. 

By Lemma~\ref{lem:cover}, $\Sigma(X,S)$ is obtained from $\Sigma(B^4,F)$ by attaching an $n/2$-framed 2-handle along the lift of $K \# T_{2,2g+1}$. As noted before, we may then fix a Legendrian representative of a lift of $K \# T_{2,2g+1}$, so the 4-manifold $\Sigma(X,S)$ admits a Stein structure whenever $n/2$ is less than the Thurston-Bennequin number of this lift. 

As before, the lift $\tilde \gamma$ has a Legendrian representative with Thurston-Bennequin number zero. If $\gamma$ bounds a disk in $X \setminus S$, then $\tilde \gamma$ bounds a disk in $\Sigma(X,S)$. Attaching a $(-1)$-framed 2-handle to $\Sigma(X,S)$ along $\tilde \gamma$ then yields a Stein domain containing a smoothly embedded 2-sphere with self-intersection $-1$, contradicting  \cite[Proposition~2.2]{lisca-matic}. We conclude that there is no diffeomorphism of $X$ taking $S'$ to $S$. \end{proof}

\begin{rem}\label{rem:jsj-2}
In the preceding proof for $g \geq 1$, observe that $\partial N(K)$ induces a torus $T \subset \partial X$ separating $\partial X$ into two pieces, one of which is $S^3 \setminus K$. We claim that the other piece, denoted $M$, is diffeomorphic to $S^3 \setminus T_{2,2g+1}$. To see this, observe that $M$ is obtained from the solid torus $N(K)$ by longitudinal Dehn surgery along the connected sum of the core curve $K$ and $T_{2,2g+1}$. Equivalently, $M$ can be obtained from an integral surgery on $T_{2,2g+1} \subset S^3$ by removing a meridian of $T_{2,2g+1}$. After  surgery, this meridian is isotopic to the core of the surgered solid torus, so its complement $M$ is $S^3 \setminus T_{2,2g+1}$. It follows that $T$ is incompressible and separates $\partial X$ into the two pieces of its JSJ decomposition \cite{jaco-shalen,johannson}: a hyperbolic piece $S^3 \setminus K$ and a Seifert-fibered piece $S^3 \setminus T_{2,2g+1}$. Every self-diffeomorphism of $\partial X$ can be isotoped to preserve $T$ setwise and thus preserve these distinct JSJ pieces on each side of $T$. For $m=0$ and $m \ll 0$, we already showed that every self-diffeomorphism of $S^3 \setminus K$ is isotopic to the identity, hence we may assume that any self-diffeomorphism of $\partial X$ restricts to the identity on $S^3 \setminus \mathring{N}(K) \subset \partial X$.
\end{rem}

\vspace{-.4in}

\section{The complex setting}\label{sec:complex}

This section adapts the above construction to the complex setting and then completes the proofs of three main results, namely Theorems \ref{thm:holomorphic}, \ref{thm:open}, and \ref{thm:stein}.

\vspace{-.1in}

\subsection{Braided surfaces and braid factorizations.} \label{subsec:factor} 

To adapt the method from \S\ref{sec:simple} to the complex setting, we require a construction of complex curves in $B^4 \subset \cc^2$ that gives fine control over the curve's smooth isotopy type.  We  will construct such curves using braid factorizations of \emph{quasipositive braids}, which are  products of conjugates $w \sigma_i w^{-1}$ of the standard positive Artin generators $\sigma_i$ in the braid group $B_n$ \cite{rudolph:qp-alg}.

Each quasipositive factorization of a quasipositive $n$-braid  determines a ribbon-immersed surface in $S^3$ formed from $n$ parallel disks by attaching a positively twisted band for each term $w \sigma_i w^{-1}$; see, for example, \cite[\S2]{rudolph:braided-surface}. The corresponding embedded surface in $B^4$ is known as  a  \emph{positively braided surface}. In \cite{rudolph:qp-alg} (as interpreted in \cite[\S4]{rudolph:braided-surface}), Rudolph showed that each such surface is isotopic to a compact piece of a smooth algebraic curve in $B^4 \subset \cc^2$ (where the isotopy of the surface restricts to braid isotopy along its boundary).

Key to our construction is the fact that inequivalent   quasipositive factorizations of a fixed quasipositive braid can yield inequivalent braided surfaces, hence inequivalent complex curves. For example, with some effort, the dedicated reader can use the techniques from \S\ref{sec:simple}-\ref{sec:symp} to show that the following braid factorizations define a pair of exotically knotted punctured tori that are holomorphically embedded in $B^4 \subset \cc^2$:
\begin{align*}
\beta&=(\sigma_{2}\sigma_{3}\sigma_{2}^{-1})(\sigma_{4}^{-1}\sigma_{3}^{-1}\sigma_{1}^{-2}\sigma_{2}\sigma_{3}\sigma_{2}^{-1}\sigma_{1}^2\sigma_{3}\sigma_{4})(\sigma_{1}^{-1}\sigma_{2}\sigma_{3}\sigma_{2}^{-1}\sigma_{1})\sigma_1^2(\sigma_{3}\sigma_{4}\sigma_{3}^{-1})\\
\beta'&=\sigma_{2}(w \sigma_{2}^{-1}\sigma_{1}\sigma_{2}w^{-1})(w\sigma_{2}^{-1}\sigma_{3}\sigma_{1}\sigma_{2}\sigma_{1}^{-1}\sigma_{3}^{-1}\sigma_{2}w^{-1}) (w\sigma_{3}^2\sigma_{4}\sigma_{3}^{-2}w^{-1})\sigma_1^2,
\end{align*}
where $w = \sigma_3 \sigma_4^{-1} \sigma_1^{-1} \sigma_3^{-2} \sigma_2^{-1}   \sigma_1^{-1} \sigma_3^{-1}$. 

However, while explicit braid factorizations are useful for constructing complex curves, they may be less convenient for showing that the underlying surfaces are exotically knotted.  For this reason, to prove Theorem~\ref{thm:holomorphic} below, we use results of Boileau-Orevkov \cite{bo:qp}  to indirectly construct positively braided surfaces associated to the exotic symplectic surfaces constructed in \S\ref{sec:symp}.

\vspace{-.1in}

\subsection{From symplectic surfaces to complex curves.}\label{subsec:symp-to-complex} As indicated above, we now realize the exotic symplectic surfaces in $B^4$ from Theorem~\ref{thm:tbd} as complex curves.

\begin{proof}[Proof of Theorem~\ref{thm:holomorphic}]
By \cite[Theorem~1]{bo:qp}, any symplectic surface in $(B^4,\omega_\st)$ with transverse boundary $K$ in $(S^3,\xi_\st)$ is diffeomorphic to a positively braided surface. In fact, the proof of \cite[Theorem~1]{bo:qp} shows that the diffeomorphism may be constructed so that the boundary of the braided surface is any chosen braid that is transversely isotopic to $K$ when viewed in $(S^3,\xi_\st)$. As discussed in \S\ref{subsec:factor}, Rudolph showed that every positively braided surface in $B^4$ is isotopic to a compact piece of a smooth algebraic curve \cite{rudolph:qp-alg,rudolph:braided-surface}.

Applying this to the symplectic surfaces $F$ and $F'$ in $B^4$ constructed in the proof of Theorem~\ref{thm:tbd}, we obtain holomorphic curves $C$ and $C'$  in $B^4 \subset \cc^2$ that are diffeomorphic to $F$ and $F'$, respectively. Moreover, $\partial C$ and $\partial C'$  are transversely isotopic to $K=\partial F=\partial F'$. Observe that there is a homeomorphism of $B^4$ carrying $C$ to $C'$. By Alexander's trick, any homeomorphism of $B^4$ is isotopic to the identity, so we further conclude that $C$ and $C'$ are topologically isotopic.
\end{proof}

Restricting Theorem~\ref{thm:holomorphic} to the case $g=0$, we obtain exotic holomorphic disks in $B^4 \subset \cc^2$.  We now prove that the double branched covers of these disks yield  exotic contractible Stein domains with the same contact boundary.

\begin{proof}[Proof of Theorem~\ref{thm:stein}] 
Let $C$ and $C'$ denote holomorphic disks in $B^4 \subset \cc^2$ constructed as in the preceding proof.  Since these disks are holomorphic, their branched double covers $W$ and $W'$ are Stein domains. Moreover, the transverse links $\partial C$ and $\partial C'$ in $(S^3,\xi_\st)$ are transversely isotopic to $K$, so the Stein domains $W$ and $W'$ fill isotopic contact structures on $\Sigma(S^3,K)$. And since $C$ and $C'$ are topologically isotopic, $W$ and $W'$ are homeomorphic. To distinguish $W$ and $W'$, we note that they are diffeomorphic to $\Sigma(B^4,D)$ and $\Sigma(B^4,D')$, respectively, where $D$ and $D'$ are  disks constructed  as in the proof of Theorem~\ref{thm:tbd}.  That proof shows that $\Sigma(B^4,D)$ and $\Sigma(B^4,D')$ are \emph{not} diffeomorphic, hence neither are $W$ and $W'$. Finally, by inspecting Figure~\ref{fig:dbc}(i), we see that $\Sigma(B^4,D)$ is contractible, hence so are $W$ and $W'$.   \end{proof}

\subsection{Exotic complex curves in open Stein surfaces.} \label{subsec:stein} We now produce exotically knotted, noncompact complex curves that are embedded in open complex surfaces. 

\begin{proof}[Proof of Theorem~\ref{thm:open}]
Consider the exotic holomorphic disks $C,C'$ in the unit ball $B^4 \subset \cc^2$ constructed in the proof of Theorem~\ref{thm:holomorphic}. Note that their boundaries $K,K' \subset S^3$ do not coincide, but they are isotopic as knots. By the proof of Theorem~\ref{thm:tbd}, there is an unknotted loop $\gamma \subset S^3 \setminus K$ that does not bound a smoothly embedded disk in $B^4 \setminus C$, yet its image under the isotopy from $K$ to $K'$ is a loop $\gamma' \subset S^3 \setminus K'$ that \emph{does} bound an embedded disk in $B^4 \setminus C'$. Before proceeding, we wish to consolidate $\gamma$ and $\gamma'$ into a single unknotted loop $\gamma''$. More specifically, we claim there exists an unknot $\gamma''$ in the complement of $K \cup K'$ (carefully constructed as a connected sum of $\gamma$ and $\gamma'$) such that $K \cup \gamma''$ is isotopic to $K \cup \gamma$, $K' \cup \gamma''$ is isotopic to $K' \cup \gamma'$, and in fact $K$ is isotopic to $K'$ in the complement of $\gamma''$. See Figure~\ref{fig:trick} and Lemma~\ref{lem:trick} below. For notational convenience, we  simply denote $\gamma''$ by $\gamma$.

Observe that $C$ and $C'$ are therefore isotopic through homeomorphisms of $B^4$ that fix a neighborhood of $\gamma$. Indeed, the isotopy that carries $K$ to $K'$ and fixes a neighborhood of $\gamma$ extends to an isotopy of $C$ (supported a collar of the boundary) that brings its boundary into alignment with that of $C'$. It follows that $C,C' \subset B^4$ are topologically isotopic (rel boundary) by Theorem~\ref{thm:iso}.

Now choose a Legendrian representative of the loop $\gamma$ lying a small tubular neighborhood of $\gamma$. Attaching a Stein 2-handle along this Legendrian, we obtain a Stein domain $X$. (Note that this manifold has boundary; we will eventually pass to its interior at the end of the proof.) Observe that $X$ is diffeomorphic to the disk bundle over $S^2$ whose Euler number is one less than the Thurston-Bennequin number of  the chosen Legendrian representative of $\gamma$.  Since the complex structure on $X$ coincides with the complex structure on $B^4 \subset \cc^2$ away from the 2-handle, the complex curves $C,C' \subset B^4$ remain holomorphic in $X$; see Remark~\ref{rem:complex} for further discussion. For convenience, we continue to denote these curves by $C,C' \subset X$.

Since $C$ and $C'$ are isotopic through homeomorphisms of $B^4$ supported away from $\gamma$, they remain topologically isotopic in $X$. However, we claim there is no diffeomorphism of $X$ carrying $C$ to $C'$. To see this, consider the branched double covers $\Sigma(X,C)$ and $\Sigma(X,C')$. Since $\gamma$ bounds a disk in the complement of $C'$, $H_2(X)$ is generated by an embedded 2-sphere disjoint from $C'$, and this lifts to a pair of homologically nontrivial, smoothly embedded 2-spheres in $\Sigma(X,C')$. 

We claim there are no such 2-spheres $S$ in  $\Sigma(X,C)$. To simplify the argument, recall from the proof of Theorem~\ref{thm:holomorphic} that there is a diffeomorphism from $\Sigma(B^4,C)$  to the 4-manifold $\Sigma(B^4,D)$ from the proof of Theorem~\ref{thm:tbd}. Thus $\Sigma(B^4,C)$ has the handle diagram shown in Figure~\ref{fig:lifts-open}, and the smooth unknotted loop $\gamma$ has a lift given by the indicated Legendrian knot $\tilde \gamma$. Letting $n$ denote the number of stabilizations, this Legendrian $\tilde \gamma$ has Thurston-Bennequin number $tb=-n$ and rotation number $r=-n$. The attaching curve for the 2-handle has $tb=1$ and $r=1$. Note that there is a second lift of $\gamma$ that is not pictured, but we can fix a Legendrian representative of it nonetheless.

\begin{figure}\center
\def\svgwidth{.55\linewidth}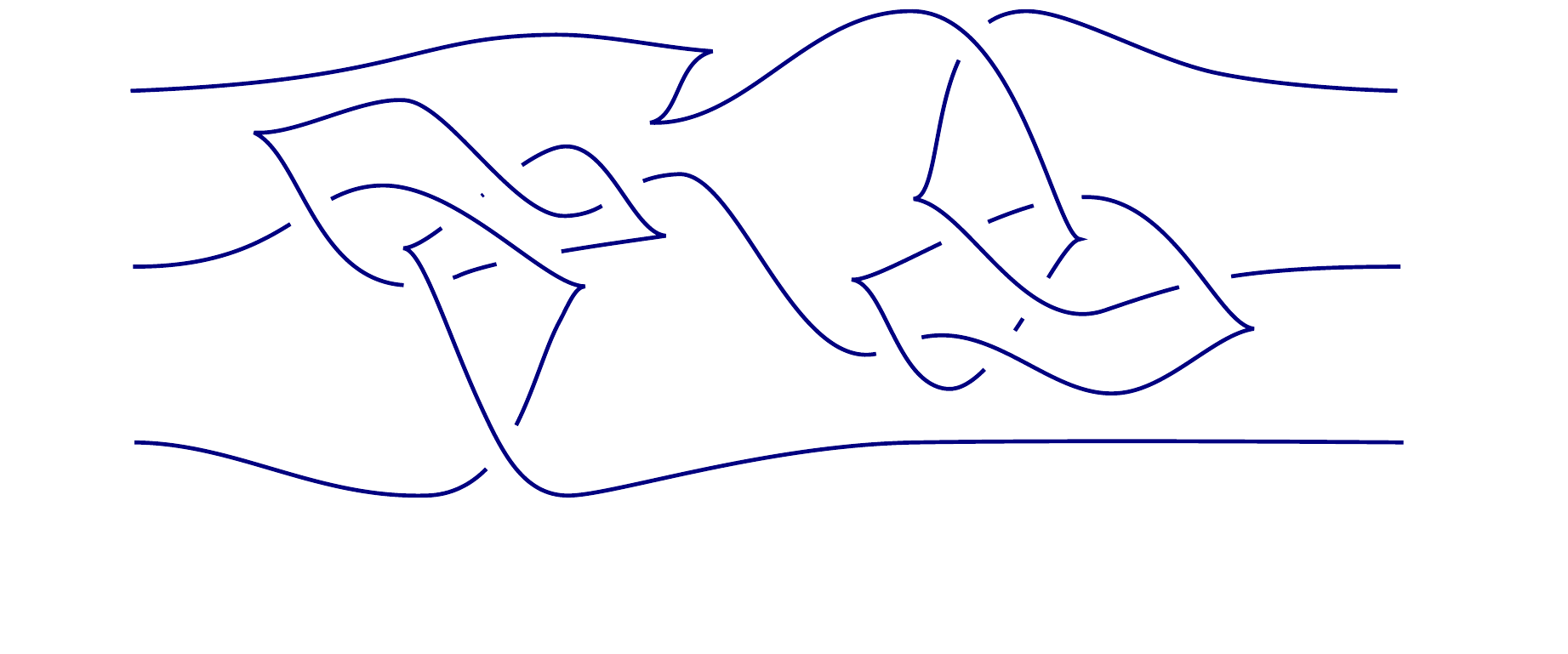
\caption{A stabilized Legendrian $\tilde \gamma$ representing a lift of $\gamma$ to $\Sigma(B^4,C)$.}
\label{fig:lifts-open}
\end{figure}

To obtain $\Sigma(X,C)$, we attach 2-handles along both lifts of $\gamma$. Taking $n$ sufficiently large, $\Sigma(X,C)$ has a Stein handle structure obtained from Figure~\ref{fig:lifts-open} by attaching $(-n-1)$-framed 2-handles along both lifts of $\gamma$. The homology class of the putative 2-sphere $S$ is represented by the difference of the 2-handle from Figure~\ref{fig:lifts-open} and the 2-handle attached along $\tilde \gamma$. Following \cite[Proposition~2.3]{gompf:stein}, the evaluation of $c_1(\Sigma(X,C))$ on $[S]$ is given by the difference in the rotation numbers of the attaching curve of the 2-handle in Figure~\ref{fig:lifts-open} and $\tilde \gamma$, giving
$$|\langle c_1(\Sigma(X,C)), [S] \rangle|=|1-(-n)|=n+1.$$
Since $S$ is homologically nontrivial, it satisfies the adjunction inequality \cite{lisca-matic}:
$$[S] \cdot [S] + | \langle c_1(\Sigma(X,C)), [S] \rangle | \leq 2g(S)-2=-2.$$
The left side of the equation is given by $$[S] \cdot [S]+ \langle c_1(\Sigma(X,C)), [S] \rangle |=(-n-1)+(n+1)=0,$$
yielding a contradiction. It follows that $\Sigma(X,C)$ is not diffeomorphic to $\Sigma(X,C')$.

Finally, consider the open complex curves $\mathring{C}$ and $\mathring{C}'$ in the interior $\mathring{X}$. Note that $\mathring{X}$ is indeed an open Stein surface (cf \cite[\S1.1]{ce:new-applications}). Since $\Sigma(X,C)$ and $\Sigma(X,C')$ are distiguished by their genus functions, their interiors are not diffeomorphic. It follows that the open complex curves $\mathring{C},\mathring{C}'$ in the interior $\mathring{X}$ remain exotic. 
\end{proof}

\begin{rem}\label{rem:complex}
If $X$ is a Stein domain and $X'$ is obtained by attaching a Stein 2-handle along a knot $K \subset \partial X$, then any properly embedded complex curve $C \subset X$ whose boundary is located away from $K$ naturally becomes a complex curve in $X'$. Indeed,  recall that $X'$ can be  obtained by attaching a 2-handle $H$ to $X$ along a nearby complex-analytic Legendrian representative of $K$, then trimming back the boundary slightly; see Figure~\ref{fig:morse} for a schematic and \cite[\S8]{ec:book} for details. By construction, the complex structure on $X'$ coincides with the complex structure on $X$ away from a neighborhood of the 2-handle. Since the complex curve $C \subset X$ is located away from $K$, it remains holomorphic in $X'$. Note that $C$ intersects $X'$ in a slightly shrunken copy of itself, since the boundary of $X\cup H$ has been trimmed back slightly.
\end{rem}

\begin{figure}[h]\center
\def\svgwidth{.55\linewidth}
\begingroup%
  \makeatletter%
  \providecommand\color[2][]{%
    \errmessage{(Inkscape) Color is used for the text in Inkscape, but the package 'color.sty' is not loaded}%
    \renewcommand\color[2][]{}%
  }%
  \providecommand\transparent[1]{%
    \errmessage{(Inkscape) Transparency is used (non-zero) for the text in Inkscape, but the package 'transparent.sty' is not loaded}%
    \renewcommand\transparent[1]{}%
  }%
  \providecommand\rotatebox[2]{#2}%
  \newcommand*\fsize{\dimexpr\f@size pt\relax}%
  \newcommand*\lineheight[1]{\fontsize{\fsize}{#1\fsize}\selectfont}%
  \ifx\svgwidth\undefined%
    \setlength{\unitlength}{921.41304557bp}%
    \ifx\svgscale\undefined%
      \relax%
    \else%
      \setlength{\unitlength}{\unitlength * \real{\svgscale}}%
    \fi%
  \else%
    \setlength{\unitlength}{\svgwidth}%
  \fi%
  \global\let\svgwidth\undefined%
  \global\let\svgscale\undefined%
  \makeatother%
  \begin{picture}(1,0.43386189)%
    \lineheight{1}%
    \setlength\tabcolsep{0pt}%
    \put(0,0){\includegraphics[width=\unitlength,page=1]{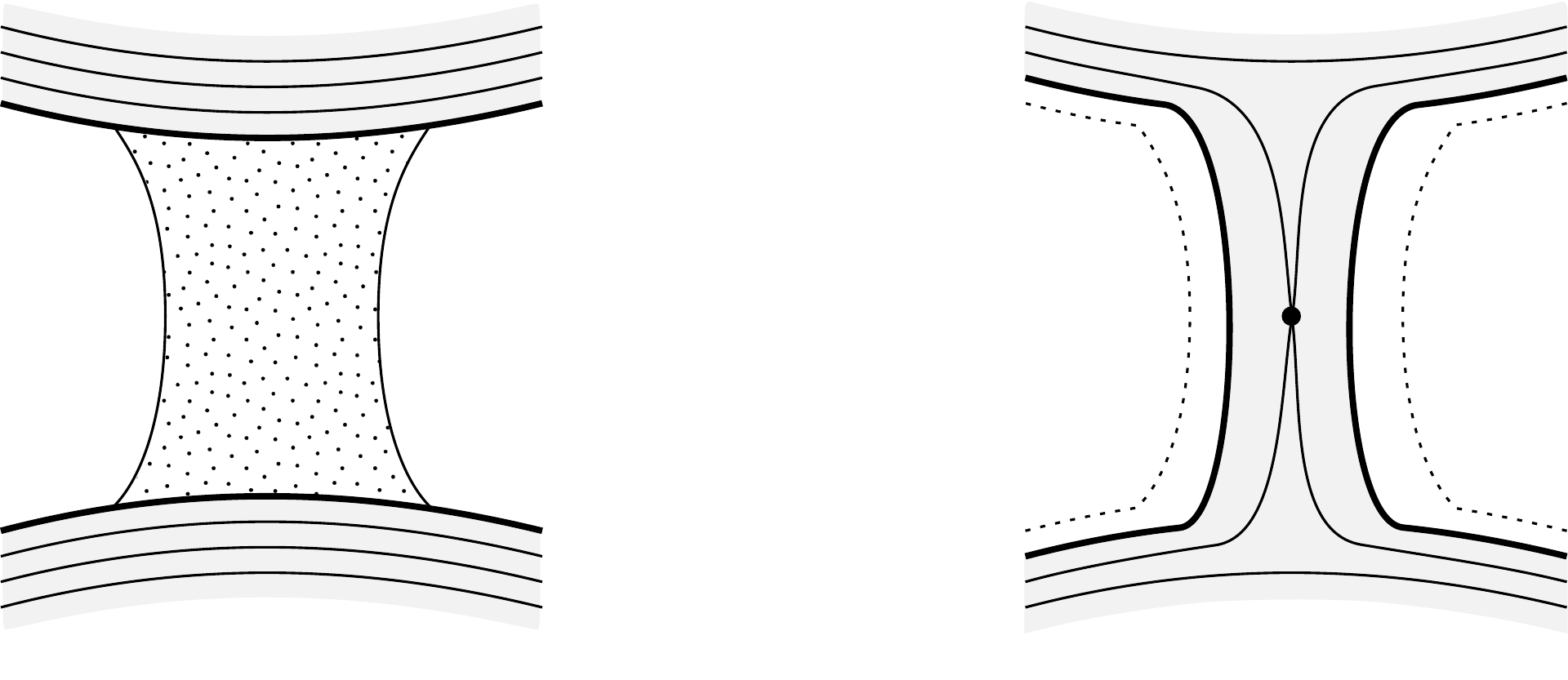}}%
    \put(0.03444554,0.21542473){\color[rgb]{0,0,0}\makebox(0,0)[lt]{\lineheight{1.25}\smash{\begin{tabular}[t]{l}$H$\end{tabular}}}}%
    \put(0.14577316,0.00319017){\color[rgb]{0.4,0.4,0.4}\makebox(0,0)[lt]{\lineheight{1.25}\smash{\begin{tabular}[t]{l}$X$\end{tabular}}}}%
    \put(-0.08436891,0.0806448){\color[rgb]{0,0,0}\makebox(0,0)[lt]{\lineheight{1.25}\smash{\begin{tabular}[t]{l}$\partial X$\end{tabular}}}}%
    \put(0.80451232,0.00319017){\color[rgb]{0.4,0.4,0.4}\makebox(0,0)[lt]{\lineheight{1.25}\smash{\begin{tabular}[t]{l}$X'$\end{tabular}}}}%
    \put(0.55790937,0.06173353){\color[rgb]{0,0,0}\makebox(0,0)[lt]{\lineheight{1.25}\smash{\begin{tabular}[t]{l}$\partial X'$\end{tabular}}}}%
    \put(0,0){\includegraphics[width=\unitlength,page=2]{morse.pdf}}%
  \end{picture}%
\endgroup%

\caption{Trimming back the boundary of $X \cup H$ to obtain $X'$.}
\label{fig:morse}
\end{figure}

\smallskip

Finally, we verify the lemma used in the proof of Theorem~\ref{thm:open}.

\begin{lem}\label{lem:trick}
Let $K$ and $K'$ be isotopic oriented knots in $S^3$, and let $\gamma$ and $\gamma'$ be oriented unknots such that the oriented links $K \cup \gamma$ and $K \cup \gamma'$ are isotopic. Then there exists a third unknot $\gamma''$ such that $K \cup \gamma''$ is isotopic to $K \cup \gamma$, $K' \cup \gamma''$ is isotopic to $K' \cup \gamma'$, and $K$ and $K'$ are isotopic in the complement of $\gamma''$.
\end{lem}

\begin{figure}\center
\def\svgwidth{.9\linewidth}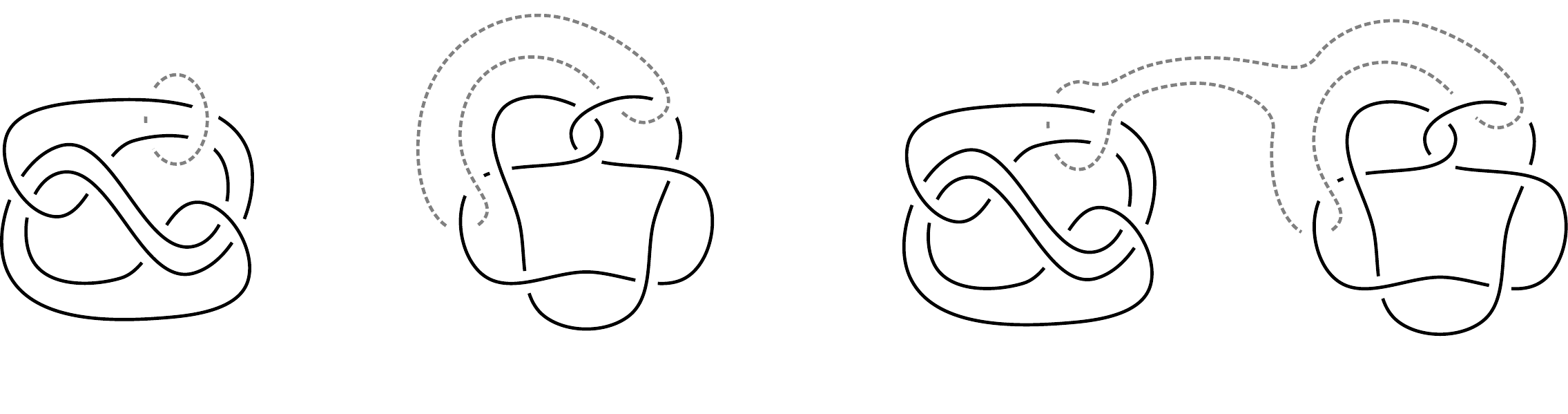
\caption{The links $K \cup \gamma$ and $K' \cup \gamma'$ are isotopic, and in fact $K$ is isotopic to $K'$ in the complement of $\gamma''$, the connected sum of $\gamma$ and $\gamma'$.}
\label{fig:trick}
\end{figure}

\begin{proof}
First, by an isotopy of $\gamma$ in $S^3 \setminus K$ (that possibly passes $\gamma$ through $K' \cup \gamma'$), we may assume that $\gamma$ is split from $K' \cup \gamma'$. We may similarly assume $\gamma'$ is split from $K \cup \gamma$. Now let $\gamma''$ be any unknot formed as a connected sum of  $\gamma$ and $\gamma'$ along an embedded band that is disjoint from $K$ and $K'$. Since $\gamma'$ is an unknot split from $K$, the connected sum $\gamma''$ of $\gamma$ and $\gamma'$ is isotopic to $\gamma$ in $S^3 \setminus K$, hence $K\cup \gamma''$ is isotopic to $K \cup \gamma$. Similarly, $K' \cup \gamma''$ is isotopic to $K' \cup \gamma'$. 

It remains to show that there is an isotopy of $S^3$ carrying $K \cup \gamma''$ to $K' \cup \gamma''$ that fixes $\gamma''$. By the above, $K \cup \gamma''$ is isotopic to $K' \cup \gamma''$. Since the isotopy preserves the orientation on $\gamma''$, we may modify the end of the isotopy so that the induced diffeomorphism of $S^3$ carrying $K \cup \gamma''$ to $K' \cup \gamma''$ restricts to the identity on a neighborhood of $\gamma''$.  Passing to a solid torus $V \cong S^1 \times D^2$ in $S^3$ formed as the exterior of $\gamma''$, we obtain a diffeomorphism of $V$ that is the identity on the boundary and carries $K \subset V$ to $K' \subset V$. Any diffeomorphism of $S^1 \times D^2$ that fixes the boundary is isotopic to the identity (rel boundary), so we conclude that $K$ and $K'$ are isotopic in the solid torus $V$, which lies in the complement of $\gamma''$.\end{proof}

\vspace{-.4in}

\section{Applications to knotted surfaces in closed 4-manifolds}\label{sec:closed}

In this final section, we illustrate further applications of exotically knotted surfaces in the 4-ball to the broader study of knotted surfaces in closed 4-manifolds.

\vspace{-.1in}

\subsection{Knotted tori in 4-manifolds with arbitrary fundamental group.}\label{subsec:tori}

Our goal in this subsection is to show that the surfaces constructed in previous sections can be used to construct  exotically knotted surfaces of positive genus in \emph{closed} 4-manifolds (including those which are not simply connected).

\begin{rem}
To construct topological isotopies between exotically knotted surfaces in a 4-manifold $Z$, most constructions in the literature require  $Z$ to be simply connected (or that $\pi_1(Z)$ is at least ``good'' in the sense of surgery theory). However, such examples can sometimes be promoted to lie in 4-manifolds of the form $Z \# Z'$, where the fundamental group of $Z'$ is unconstrained. In particular, by modifying the topological isotopy between the surfaces to fix a small 4-ball in $Z$ where the connected sum operation is performed, the surfaces remain topologically isotopic in $Z \# Z'$. Under certain conditions on the surfaces and the 4-manifolds $Z$ and $Z'$, the induced surfaces in $Z \# Z'$ can still be distinguished using Bauer-Furuta invariants (which are more robust under connected sums than Seiberg-Witten invariants).
\end{rem}

We produce exotically knotted surfaces in non-simply-connected 4-manifolds without appealing to the connected sum construction sketched above.

\begin{thm}\label{thm:fun-group}
For every finitely presented group $G$, there is a closed 4-manifold $Z$ with $\pi_1(Z)  \cong G$ containing a pair of exotically knotted tori. Moreover, we may assume that there are no essential 3-spheres that are disjoint from both tori. 
\end{thm}

In fact, it is straightforward to arrange for the ambient 4-manifold $Z$ in Theorem~\ref{thm:fun-group} to be irreducible. However, to simplify the argument, we content ourselves with ensuring that just one of the tori has irreducible complement.

\begin{proof}
Let $K\subset S^3$ be the knot represented by the dashed curve in Figure~\ref{fig:torus-trace}(a), where $S^3$ is depicted as zero-surgery on the Hopf link. This knot coincides with $K_0$ in Figure~\ref{fig:main-link} from the proof of Theorem~\ref{thm:tbd}. Applying the construction from \S\ref{sec:simple}, $K$ bounds a pair of disks  $D,D' \subset B^4$ that are topologically isotopic (and indeed coincide with those considered in Theorem~\ref{thm:tbd}). Let $X$ be the 4-manifold obtained from $B^4$ by attaching a zero-framed ``genus-one handle'' along $K$ as defined in \cite[\S3]{hp:embedding}. In particular, letting $F$ denote a compact surface of genus one with one boundary component, we glue $F \times D^2$ to $B^4$ by identifying $\partial F \times D^2$ with a tubular neighborhood of $K$. A handle diagram of $X$ is shown in Figure~\ref{fig:torus-trace}(b). Let $T,T' \subset X$ denote the tori obtained by gluing the slice disk $D,D' \subset B^4$ to the core $F \times 0$ of the genus-one handle.

\begin{figure}\center
\smallskip
\def\svgwidth{\linewidth}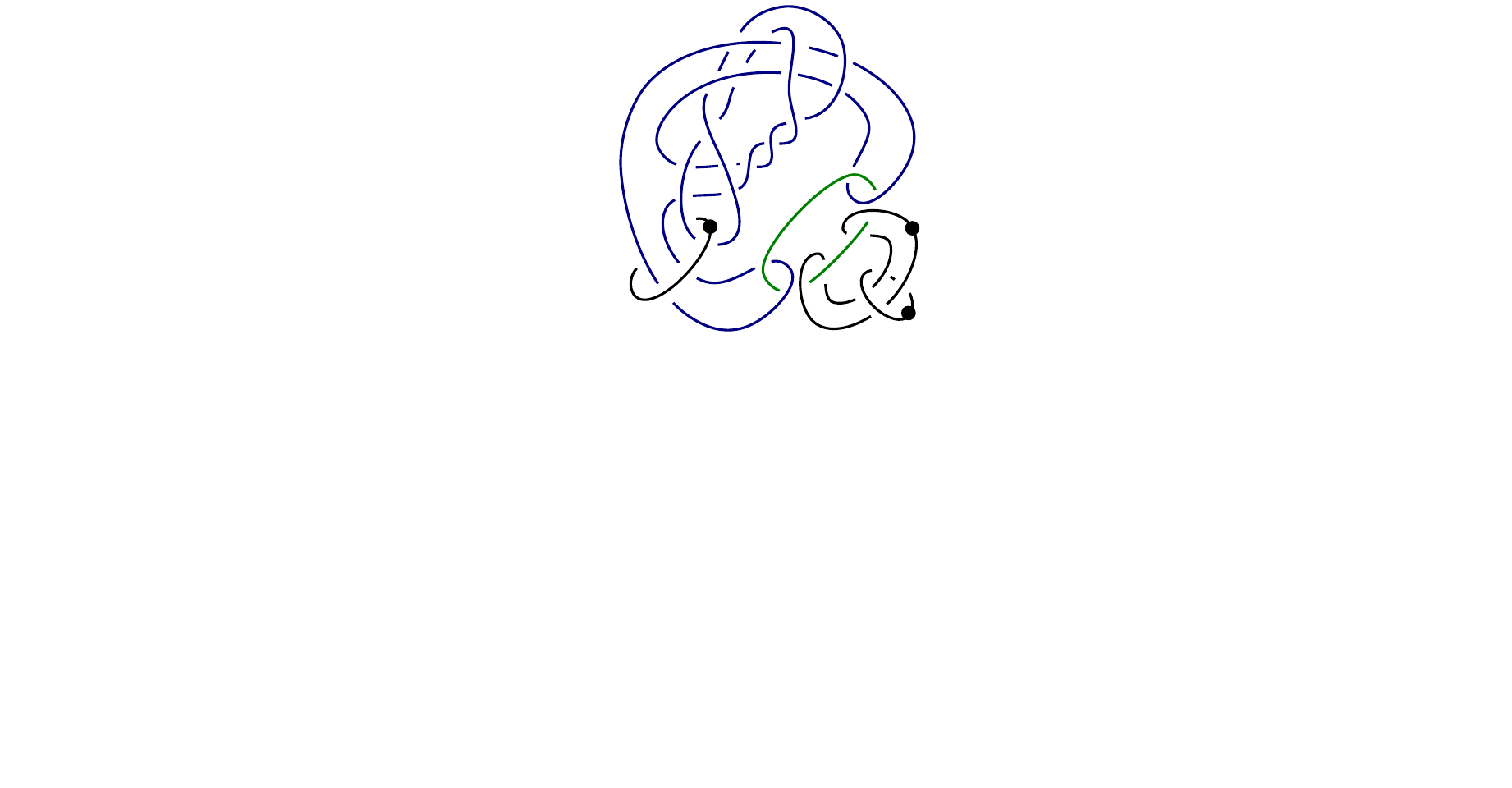
\caption{Part (a) depicts a slice knot $K$ in $S^3$. Parts (b) and (c) depict the 4-manifolds $X$ and $X_T$ in the proof of Theorem~\ref{thm:fun-group}. Part (d) embeds $X_T$ in a larger Stein domain.}\label{fig:torus-trace}
\end{figure}

We construct the desired 4-manifold $Z$ in a few steps. First, let $X_T$ denote the 4-manifold obtained by performing a logarithmic transform with multiplicity zero on $T\subset X$; see \cite{GompfStipsicz4} for background. Following \cite[Figure~8.25]{GompfStipsicz4}, a handle diagram for $X_T$ is given in  Figure~\ref{fig:torus-trace}(c).

Next, we will embed $X_T$ into a closed, minimal symplectic 4-manifold with the desired fundamental group $G$; here a symplectic 4-manifold is \emph{minimal} if it contains no smoothly embedded 2-spheres of self-intersection $-1$. To do so, we first attach four 2-handles to $X_T$ to embed it in a simply connected Stein domain, as illustrated in Figure~\ref{fig:torus-trace}(d).  By \cite[Theorem~2.1]{a-o:topology}, this Stein domain embeds into a simply connected, closed, minimal symplectic 4-manifold $Q$. Moreover, it is straightforward to arrange that $b_2^+(Q)>1$ and that the symplectic cap $C=Q \setminus \mathring{X}_T$ contains a homologically essential Lagrangian torus of square zero with simply connected complement; see, for example,  \cite[Theorem~1.13]{emm:exotic}. We now modify the fundamental group of the cap: By  \cite[Theorem~4.1]{gompf:construction}, every finitely presented group $G$ arises as the fundamental group of a closed, minimal symplectic 4-manifold $Y$ \cite[Theorem~4.1]{gompf:construction}. Moreover, Gompf shows that $Y$ may be assumed to contain a Lagrangian torus of square zero for which the inclusion-induced map $\pi_1(T^2) \to \pi_1(Y)$ is zero. Let us modify $C$ by taking its symplectic sum (as in \cite{gompf:construction}) with $Y$ along the distinguished Lagrangian tori in each of these pieces. The modified cap $C'$ now has fundamental group $G$. Let $Z_T$ be the closed, symplectic 4-manifold $X_T \cup C'$, which has $\pi_1(Z_T) \cong G$. Note that, since $Z_T$ is the symplectic sum of the minimal symplectic 4-manifolds $X_T \cup C$ and $Y$ along a surface of positive genus, $Z_T$  is  minimal by \cite{usher:minimality}.

Now observe that $Z=X \cup C'$ is a closed 4-manifold with $\pi_1(Z) \cong G$. Performing a multiplicity zero log transform along $T$ inside $X \subset Z$ yields the minimal symplectic 4-manifold $Z_T$. Note that, since $Z_T$ is a simply connected, minimal symplectic 4-manifold with $b_2^+>1$, it is irreducible by \cite{kotschick:survey}; this implies that $Z \setminus T$ is irreducible.

\begin{figure}[t]\center
\smallskip
\def\svgwidth{.75\linewidth}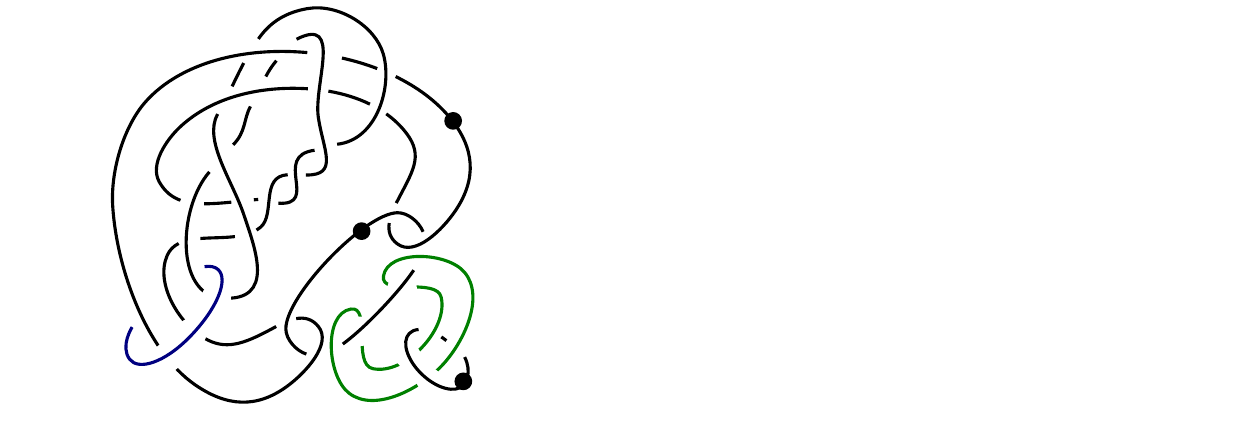
\caption{The 4-manifold $X_{T'}$ and the result of attaching a subset of the additional 2-handles from the cap $Q$ to $X_{T'}$.}\label{fig:torus-trace-prime}
\end{figure}

On the other hand, it is straightforward to see that performing a multiplicity zero log transform on $T'$ inside $X \subset Z$ yields a 4-manifold $Z_{T'}=X_{T'} \cup C'$ that is \emph{not} minimal. To see this, observe that $X_{T'}$ has the handle diagram shown in Figure~\ref{fig:torus-trace-prime}(a), so $Z_{T'}=X_{T'} \cup C'$ contains the 4-manifold shown in Figure~\ref{fig:torus-trace-prime}(b). The latter contains a smooth 2-sphere of square $-1$ formed by the slice disk for the leftmost $-1$-framed unknot in Figure~\ref{fig:torus-trace-prime}(b). It follows that $T$ and $T'$ are not smoothly isotopic in $Z$. \end{proof}

\vspace{-.2in}

\subsection{Exotic surfaces with wild knot groups.}\label{subsec:cuspidal} We can use the same principle of ``localized knotting''  to address a geography problem for knot groups of exotic surfaces. In this subsection, we appeal to Kim's ``twisted'' variant \cite{kim:twist} of  Fintushel-Stern's rim surgery construction \cite{fs:surfaces}.

Let $X$ be a 4-manifold containing a smoothly embedded surface $\surf$, and let $\alpha \subset \surf$ be a nonseparating curve. We may  choose a neighborhood $N \cong S^1 \times B^3$ of $\alpha$ in $X$ such that $\surf \cap N$ is identified with $S^1 \times I$, where $I \subset B^3$ is an unknotted, properly embedded arc;  see Figure~\ref{fig:twist}. Given a knot $K \subset S^3$, we can form a knotted arc $K_+ \subset B^3$ from the connected sum of $I \subset B^3$ and $K \subset S^3$. We may then modify $\surf$ by replacing the pair $S^1\times (B^3,I)$ with $S^1 \times (B^3, K_+)$. We say that the new surface $\surf_K$ in $X$ is obtained from $\surf$ by \emph{rim surgery} along $\alpha$ using the knot $K$.

\begin{figure}[h]\center
\smallskip
\def\svgwidth{.825\linewidth}
\begingroup%
  \makeatletter%
  \providecommand\color[2][]{%
    \errmessage{(Inkscape) Color is used for the text in Inkscape, but the package 'color.sty' is not loaded}%
    \renewcommand\color[2][]{}%
  }%
  \providecommand\transparent[1]{%
    \errmessage{(Inkscape) Transparency is used (non-zero) for the text in Inkscape, but the package 'transparent.sty' is not loaded}%
    \renewcommand\transparent[1]{}%
  }%
  \providecommand\rotatebox[2]{#2}%
  \newcommand*\fsize{\dimexpr\f@size pt\relax}%
  \newcommand*\lineheight[1]{\fontsize{\fsize}{#1\fsize}\selectfont}%
  \ifx\svgwidth\undefined%
    \setlength{\unitlength}{769.71313477bp}%
    \ifx\svgscale\undefined%
      \relax%
    \else%
      \setlength{\unitlength}{\unitlength * \real{\svgscale}}%
    \fi%
  \else%
    \setlength{\unitlength}{\svgwidth}%
  \fi%
  \global\let\svgwidth\undefined%
  \global\let\svgscale\undefined%
  \makeatother%
  \begin{picture}(1,0.2521415)%
    \lineheight{1}%
    \setlength\tabcolsep{0pt}%
    \put(0,0){\includegraphics[width=\unitlength,page=1]{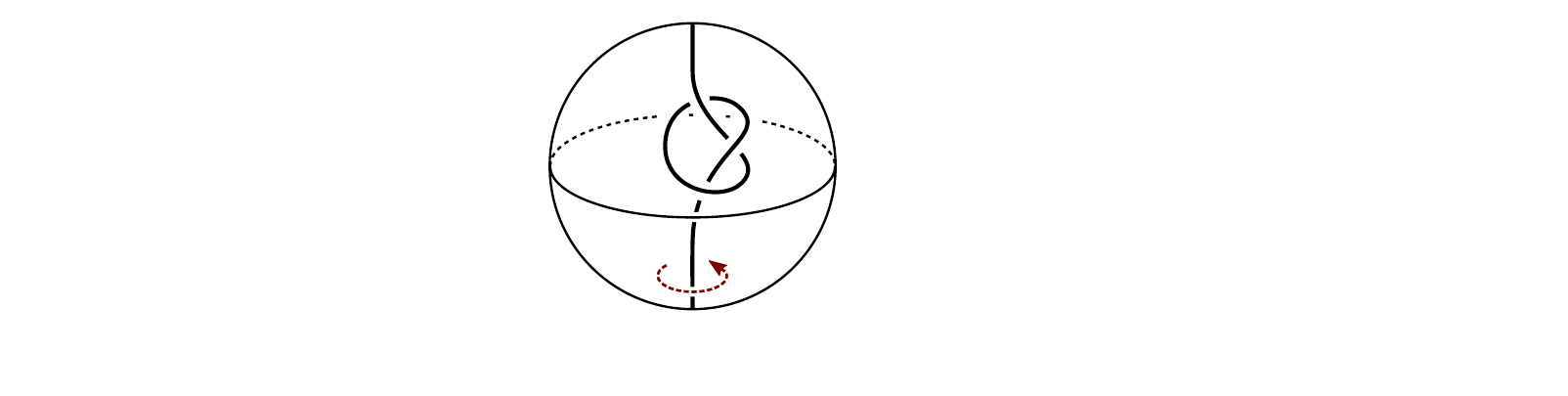}}%
    \put(0.4672212,0.0848561){\color[rgb]{0.50196078,0,0}\makebox(0,0)[lt]{\lineheight{1.25}\smash{\begin{tabular}[t]{l}$\tau$\end{tabular}}}}%
    \put(0.38063171,0.00687018){\color[rgb]{0,0,0}\makebox(0,0)[lt]{\lineheight{1.25}\smash{\begin{tabular}[t]{l}$K_+ \subset B^3$\end{tabular}}}}%
    \put(0.05523659,0.00687018){\color[rgb]{0,0,0}\makebox(0,0)[lt]{\lineheight{1.25}\smash{\begin{tabular}[t]{l}$I \subset B^3$\end{tabular}}}}%
    \put(0,0){\includegraphics[width=\unitlength,page=2]{twist-rim-surgery.pdf}}%
    \put(0.67862006,0.13562558){\color[rgb]{0,0.50196078,0}\makebox(0,0)[lt]{\lineheight{1.25}\smash{\begin{tabular}[t]{l}$\mu$\end{tabular}}}}%
    \put(0.91109863,0.08136454){\color[rgb]{0,0,0.50196078}\makebox(0,0)[lt]{\lineheight{1.25}\smash{\begin{tabular}[t]{l}$\alpha'$\end{tabular}}}}%
    \put(0.68811462,0.00784076){\makebox(0,0)[lt]{\lineheight{1.25}\smash{\begin{tabular}[t]{l}$\mu,\alpha \subset S^1 \! \times_{\tau^d} \! (B^3 \setminus K_+)$\end{tabular}}}}%
  \end{picture}%
\endgroup%

\caption{}\label{fig:twist}
\end{figure}

For Kim's variation on rim surgery, we let $\tau$ be the diffeomorphism of $(B^3,K_+)$ obtained by spinning the arc $K_+$ through a full rotation; see \cite{kim:twist} for a precise definition. The mapping torus of $\tau^d$ defines a twisted annulus $S^1 \times_{\tau^d} (B^3,K_+)$, and we let $\surf_K^{d}$ denote the surface obtained from $\surf$ by replacing $S^1 \times (B^3, I)$ by $S^1 \times_{\tau^d} (B^3,K_+)$; this is known as \emph{$d$-twisted rim surgery}.

The following lemma is inspired by \cite[Lemma~2.1]{kim-ruberman:non-simply}.

\begin{lem}\label{lem:fun-group}
Let $\surf$ be a smooth, closed, oriented surface in a compact 4-manifold $X$ whose meridian is a primitive element of finite order $d > 1$ in $H_1(X \setminus \surf)$. Fix a knot $K$ in $S^3$, and let $G$ denote the fundamental group of the $d$-fold cyclic branched cover of $S^3$ along $K$. Then $\pi_1(X \setminus \surf_K^d)$ has a quotient containing $G$ as a subgroup.
\end{lem}

\begin{proof}
Our notation mostly follows that of Kim \cite{kim:twist}. Let $\mu$ be a meridian to $\surf \subset X$ and let $\alpha'$ be a pushoff of $\alpha$ into $X \setminus \surf$, which we can choose to be nullhomologous in $X \setminus \surf$; see the right side of Figure~\ref{fig:twist} for a schematic.  The group $\pi_1(X \setminus \surf^d_K)$ is obtained by amalgamating $\pi_1(X \setminus \surf)$ and $\pi_1(S^1 \! \times_{\tau^d} \! (B^3 \setminus K_+))$ over $\pi_1(S^1 \! \times \! (\partial B^3 \setminus \partial K_+)) = \langle \mu, \alpha'  \rangle  \cong \zz \oplus \zz.$ 
  In particular,  the Seifert-Van Kampen theorem provides the leftmost square in the following diagram, where the maps $\varphi_i$ and $\psi_i$ are induced by the obvious inclusions (and the remaining maps are explained below):
  \begin{equation}\label{eq:svk2}
\begin{tikzcd}
\pi_1(S^1 \times (\partial B^3 \setminus \partial K_+)) \arrow[r, "\varphi_1", xshift=-4.5mm]  \arrow[d, "\varphi_2", xshift=-4.5mm] &    \pi_1(X \setminus \surf)   \arrow[r, "h", xshift=-14mm, two heads]  \arrow[d, "\psi_1", xshift=-20mm] &   \zz/d \arrow[d, "m", xshift=-23mm]  \\
\pi_1(S^1 \times_{\tau^d} ( B^3 \setminus  K_+)) \arrow[r, "\psi_2", xshift=-27mm] \arrow[rr, bend right=15, "\rho"', xshift=-29.5mm, two heads] &  \pi_1(X \setminus \surf_K^d) \arrow[r, "\exists!", xshift=-37mm, dashed] &  \pi_1(B^3 \setminus K_+)/ \langle \, \mu^d \, \rangle 
\end{tikzcd}
\end{equation}
Following \cite{kim:twist}, the group $\pi_1(S^1\! \times_{\tau^d} \! (B^3 \setminus  K_+)) $ has a simple  presentation:
\begin{align*}
\pi_1(S^1\times_{\tau^d} &(B^3 \setminus  K_+)) \\
&= \langle \, \pi_1(B^3 \setminus K_+), \alpha' \mid    (\alpha')^{-1}\beta \alpha' =  \tau_*^d(\beta) \ \text{for all } \beta \in \pi_1(B^3 \setminus K_+) \, \rangle
\end{align*} 
While $\alpha'$ and $\mu^d$ are nullhomologous in $X \setminus \surf^d_K$, we wish to kill them at the level of $\pi_1$, leading us to a natural quotient of $\pi_1(S^1 \! \times_{\tau^d} \! (B^3 \setminus  K_+))$:
\begin{align*}
&\pi_1(S^1\times_{\tau^d} (B^3 \setminus  K_+)) / \langle \, \alpha', \mu^d \, \rangle 
\\
&=\langle \, \pi_1(B^3 \setminus K_+), \alpha' \mid \,  \alpha'=1, \, \mu^d=1,  \,   (\alpha')^{-1}\beta \alpha' =  \tau_*^d(\beta) \ \text{for all } \beta \in \pi_1(B^3 \setminus K_+) \, \rangle
 \\
 &= \langle \, \pi_1(B^3 \setminus K_+) \mid  \mu^d=1, \,  \beta =  \tau_*^d(\beta) \ \text{for all } \beta \in \pi_1(B^3 \setminus K_+) \, \rangle.
 \end{align*}
To further simplify this, we note as in  \cite[\S2]{kim-ruberman:non-simply} that $\tau$ acts on $\pi_1(B^3 \setminus K_+)$ through conjugation by $\mu$ (viewed as a meridian of $K$), i.e., $\tau_*(\beta)=\mu^{-1} \beta \mu$, so $\tau^d_*(\beta)=(\mu^{-1})^{d} \beta \mu^d$. Therefore, after killing $\mu^d$, we have $\tau_*^d(\beta)=\beta$, hence
$$
\pi_1(S^1\times_{\tau^d} (B^3 \setminus  K_+)) / \langle \, \alpha', \mu^d \, \rangle 
= \pi_1(B^3 \setminus K_+)/ \langle \, \mu^d \, \rangle.$$
Let $\rho: \pi_1(S^1 \! \times_{\tau^d} \! (B^3 \setminus K_+)) \to \pi_1(B^3 \setminus K_+) / \langle \, \mu^d \, \rangle$ denote the quotient map. Next, define $h  :   \pi_1(X \! \setminus \! \surf) \! \to \zz/d$ by composing the Hurewicz map $\pi_1(X \! \setminus \! \surf) \! \to \! H_1(X \! \setminus \! \surf)$ with projection onto the $\zz/d$-summand of $H_1(X \!\setminus \! \surf)$  generated by the meridian to $\surf$. Finally, let $m : \zz/d \to \pi_1(B^3 \setminus K_+)/ \langle \, \mu^d \, \rangle$ be the obvious map sending the generator of $\zz/d$ (which represents the homology class of the meridian $\mu$) back to $\mu$ in $\pi_1(B^3 \setminus K_+)$.

Observe that the outer edges of \eqref{eq:svk2} commute. By the universal property of pushouts, there is a unique map from $\pi_1(X\setminus \surf_K^d)$ to $\pi_1(B^3 \setminus K_+)/\langle \, \mu^d \, \rangle$ making the entire diagram commute. Moreover, this must be a surjection because $\rho$ is a surjection.  Recall that the fundamental group $G$ of the $d$-fold cyclic branched cover of $S^3$ along $K$ is obtained from an index-$d$ subgroup of $\pi_1( B^3 \setminus K_+) \cong \pi_1(S^3 \setminus K)$ after killing the subgroup normally generated by $\mu^d$. It follows that $\pi_1(B^3 \setminus K_+)/\langle \, \mu^d \, \rangle$ is a quotient of  $\pi_1(X \setminus \surf_K^d)$ that contains $G$ as a subgroup. 
\end{proof}

As a final prerequisite for  Theorem~\ref{thm:cuspidal}, we describe a well-behaved class of singular surfaces: As in \cite{golla-starkston}, we say that a \emph{singular symplectic curve} $S$ in a symplectic 4-manifold $X$ is a subset $S \subset X$ that is smooth and symplectic away from a finite set of singularities, where each singular point has an open neighborhood $U$ such that $(U,S \cap U)$ is symplectomorphic to a pair $(V, C \cap V)$ for an open set $V \subset \cc^2$ and a complex curve $C \subset \cc^2$. 

A singular point $x \in S$ is a \emph{cusp} if it is locally modeled on the complex curve  $z^p + w^q=0$ for relatively prime integers $p,q \geq 2$. We may \emph{resolve} a cusp by patching in a smooth surface modeled on the nonsingular complex curve $z^p + w^q =\epsilon$ for sufficiently small $\epsilon>0$.  This patching can be done symplectically; see, for example, \cite[\S2]{etnyre-golla} (especially Lemma 2.2 and the surrounding discussion).

We now restate and prove Theorem~\ref{thm:cuspidal}.

\begin{thmcusp*}
Suppose  $\surf$ is a smooth surface obtained by resolving a cusp of a singular symplectic curve in a closed or convex symplectic 4-manifold $X$.\begin{enumerate}
\item[\normalfont \text{(a)}] If $\surf \cdot \surf \geq 0$, then there are infinitely many smoothly embedded surfaces in $X$ that are topologically but not smoothly isotopic to $\surf$.
\item [\normalfont \text{(b)}] Moreover, if $g(\surf)>1$ and the meridian to $\surf$ is a primitive element of finite order $d>1$ in $H_1(X \setminus \surf)$, then the  class $[\surf]  \in H_2(X)$ contains infinitely many pairwise exotic surfaces whose knot groups contain nonabelian free subgroups.
\end{enumerate}
\end{thmcusp*}

\begin{proof}
The surface $F$ is obtained by resolving a cusp of a singular symplectic curve, so there is an embedded $B^4  \subset X$ such that the subsurface $F_-=F \cap B^4$ is modeled on the smooth curve 
\begin{equation}\label{eq:cusp}
\{(z,w) \in \cc^2: z^p + w^q = \epsilon, \, |z|^2+|w|^2 \leq 1\}
\end{equation}
 for a small value  $\epsilon>0$ where $p,q \geq 2$ are coprime. This surface $\surf_- \subset B^4$ is isotopic to a Seifert surface for the $(p,q)$-torus knot, hence has positive genus and $\pi_1(B^4 \setminus \surf_-)\cong \zz$.

Next, choose an infinite family of knots $\{K_n\}_{n=1}^\infty$ such that the Alexander polynomials of $K_i$ and $K_j$ have distinct sets of coefficients for $i \neq j$; for example, $K_n$ could be chosen to be a fibered knot of genus $n$.  Then choose a simple closed curve $\gamma_- \subset \surf_-$ that is homologically essential in $\surf$. Let $\surf_{i}$ denote the surface obtained from $\surf$ by performing 1-twisted rim surgery with the knot $K_i$ along $\gamma_-$.

We first claim that the surfaces $\surf_i$ and $\surf_{j}$ are  topologically isotopic. These surfaces coincide outside of the chosen 4-ball in $X$, where they are each obtained from the same twisted rim surgery on $\surf_-$. Though $\surf_i$ and $ \surf_{j}$ differ in the interior of this 4-ball, their subsurfaces in the 4-ball are each topologically isotopic rel boundary to $\surf_-$ by \cite[Theorem~1.6]{conway-powell:Z}. It follows that $\surf_i$ and $\surf_{j}$ are topologically isotopic to each other.

Since $\surf$ is symplectic with $\surf \cdot \surf \geq 0$ and the Alexander polynomials of $K_i$ and $K_j$ have distinct sets of coefficients, the pairs $(X,\surf_i)$ and $(X,\surf_j)$ are smoothly distinct for all $i \neq j$ \cite{fs:surfaces,kim:twist}.  This establishes  part (a) of the theorem.

To prove (b), we first narrow the above construction slightly: the surface in \eqref{eq:cusp} can be smoothly isotoped to intersect a 4-ball of smaller radius $\delta<1$ along the surface $$\{(z,w) \in \cc^2: z^2 + w^3 = \epsilon, \, |z|^2+|w|^2 \leq \delta^2\},$$
which is isotopic to the Seifert surface of the right-handed trefoil. Let $\surf_- \subset \surf$ denote this  subsurface, and carry out the construction used to prove part (a), producing a family of exotically knotted surfaces $\surf_i$ that are topologically isotopic to $\surf$.

We now perform a second round of rim surgery: Choose another simple closed curve $\gamma_+$ in $\surf_+=\surf \setminus \mathring{\surf}_-$ that is homologically essential in $\surf$; this is possible because $g(\surf_+)=g(\surf)-1>0$. We also wish to fix a knot $J \subset S^3$  such that the fundamental group of the $d$-fold cyclic branched cover of $S^3$ along $J$ contains a nonabelian free subgroup. To this end, we recall that fundamental groups of closed, hyperbolic 3-manifolds are known to contain nonabelian free subgroups; see \cite{3-manifold-groups}, especially Diagram 1. Thus it suffices to take $J$ to be the knot $8_{18}$ or any other hyperbolic knot whose double branched cover is hyperbolic; the higher-order cyclic branched covers are then guaranteed to be hyperbolic by the orbifold theorem (see \cite{boileau-porti,cooper-hodgson-kerckhoff}),  which shows that the $n$-fold cyclic branched cover of a hyperbolic knot $J$ in $S^3$ is  hyperbolic for $n \geq 3$ unless $n=3$ and $J$ is the figure-eight knot (in which case the double branched cover is also not hyperbolic).

Finally, let $\surf'_i$ be the surface obtained from $\surf_i$ by performing $d$-twisted rim surgery along $\gamma_+$ using the knot $J$. Since $\surf'_i$ and $\surf'_j$ coincide outside the small chosen 4-ball where we performed 1-twisted rim surgery on $\surf_- \subset B^4$, they are again topologically isotopic by \cite{conway-powell:Z}. By Lemma~\ref{lem:fun-group}, each knot group $\pi_1(X \setminus \surf'_i)$ has a quotient containing $G$ as a subgroup. Since $G$ contains a nonabelian free subgroup, it is easy to see  that $\pi_1(X \setminus \surf'_i)$ itself must contain a nonabelian free subgroup.

Finally, to distinguish $\surf'_i$ and $\surf'_j$ smoothly, we recall that the relative Seiberg-Witten invariant of $(X,\surf'_i)$ is obtained from that of $(X,\surf_i)$ by multiplying by the Alexander polynomial   $\Delta_J(t)$ of $J$ \cite{fs:surfaces,kim:twist}, hence from that of $(X,\surf)$ by multiplying by $\Delta_{K_i}(t) \Delta_J(t)$. The coefficients of $\Delta_{K_i}(t) \Delta_J(t)$ and $\Delta_{K_j}(t) \Delta_J(t)$ still differ whenever $i \neq j$, so the pairs $(X,\surf'_i)$ and $(X,\surf'_j)$ with $i \neq j$ are still distinguished up to diffeomorphism by their relative Seiberg-Witten invariants.
\end{proof}

\begin{rem}
This argument, hence Theorem~\ref{thm:cuspidal}, applies equally well to smoothings of reducible singularities modeled on the curve $z^p + w^q =0$ where the integers $p,q \geq 2$ are \emph{not}  coprime, provided we exclude $p=q=2$.
\end{rem}

\subsection{Exotically knotted singular surfaces in 4-space.} As mentioned in the introduction, we can also consider piecewise-linearly embedded surfaces. In particular, we say that a map between smooth manifolds $M$ and $N$ is  \emph{piecewise-linear} if there exist triangulations of $M$ and $N$ compatible with their smooth structures with respect to which the map is simplicial, as defined in \cite[\S2.C]{hatcher}. Using Theorem~\ref{thm:holomorphic}, we obtain exotically knotted PL surfaces in $S^4$. 

\begin{proof}[Proof of Proposition~\ref{prop:PL}]
Let $F,F' \subset B^4$ be exotic surfaces of genus $g$ bounded by the same knot $K \subset S^3$ as constructed in the proof of Theorem~\ref{thm:tbd}. Let $S,S' \subset S^4$ be piecewise-linearly embedded surfaces obtained from $F,F' \subset B^4$ by taking the cone on $K$. Since $F,F' \subset B^4$ are topologically isotopic rel boundary, the surfaces $S,S' \subset S^4$ are topologically isotopic.

For the sake of contradiction, suppose there is a PL homeomorphism $f$ of $S^4$ taking $S$ to $S'$. Choose a small PL regular neighborhood $V$ of the cone point in $S$, where $V$ is PL homeomorphic to $B^4$. The pair $(S^4 \setminus \mathring{V}, S \setminus \mathring{V})$ is PL homeomorphic to $(B^4,F)$. The PL homeomorphism $f: (S^4,S) \to (S^4,S')$ induces a PL homeomorphism
$$(B^4,F) \underset{\textsc{PL}}{\cong} (S^4 \setminus \mathring{V}, S \setminus \mathring{V}) \longrightarrow (S^4 \setminus f(\mathring{V}),S' \setminus f(\mathring{V})) \underset{\textsc{PL}}{\cong} (B^4, F').$$
However, no such PL homeomorphism can exist. Indeed, this would induce a PL homeomorphism of the branched covers $\Sigma(B^4,F)$ and $\Sigma(B^4,F')$. Any PL 4-manifold admits a unique smooth structure up to diffeomorphism; see, e.g., the survey in \cite[\S1]{benedetti}. It follows that $\Sigma(B^4,F)$ and $\Sigma(B^4,F')$ are diffeomorphic, contradicting the proof of Theorem~\ref{thm:tbd}.
\end{proof}

\vspace{-.1in}

\titleformat{\section}
  {\normalfont\fontsize{14}{5}\bfseries}{}{1em}{}
{\small \footnotesize \bibliographystyle{alphamod}
\bibliography{biblio}}

\end{document}